\newcommand{\bbC}{{\mathbb{C}}}
\newcommand{\bbD}{{\mathbb{D}}}
\newcommand{\bbE}{{\mathbb{E}}}
\newcommand{\bbP}{{\mathbb{P}}}
\newcommand{\bbR}{{\mathbb{R}}}
\newcommand{\bbU}{{\mathbb{U}}}
\newcommand{\bbZ}{{\mathbb{Z}}}
\newcommand{\fre}{{\frak{e}}}
\newcommand{\calC}{{\mathcal{C}}}
\newcommand{\calG}{{\mathcal G}}
\newcommand{\calH}{{\mathcal H}}
\newcommand{\calL}{{\mathcal L}}
\newcommand{\calM}{{\mathcal M}}
\newcommand{\bdone}{{\boldsymbol{1}}}
\newcommand{\no}{\nonumber}
\newcommand{\lb}{\label}
\newcommand{\ti}{\tilde  }
\newcommand{\wti}{\widetilde  }
\newcommand{\tr}{\text{\rm{Tr}}}
\newcommand{\supp}{\text{\rm{supp}}}
\newcommand{\bi}{\bibitem}
\newcommand{\beq}{\begin{equation}}
\newcommand{\eeq}{\end{equation}}
\newcommand{\ba}{\begin{align}}
\newcommand{\ea}{\end{align}}
\newcounter{smalllist}
\newenvironment{SL}{\begin{list}{{\rm\roman{smalllist})}}{%
\setlength{\topsep}{0mm}\setlength{\parsep}{0mm}\setlength{\itemsep}{0mm}%
\setlength{\labelwidth}{2em}\setlength{\leftmargin}{2em}\usecounter{smalllist}%
}}{\end{list}}
\newcommand{\comm}[1]{}
\DeclareMathOperator{\re}{Re}
\DeclareMathOperator{\im}{Im}
\numberwithin{equation}{section}
\newtheorem{theorem}{Theorem}[section]
\newtheorem*{p2.1}{Proposition 2.1}
\newtheorem{proposition}[theorem]{Proposition}
\newtheorem{lemma}[theorem]{Lemma}
\theoremstyle{definition}
\newtheorem{example}[theorem]{Example}
\newtheorem*{remark}{Remark}
\newtheorem*{remarks}{Remarks}
\newcommand{\jap}[1]{\langle #1 \rangle}
\newcommand{\norm}[1]{\lVert#1\rVert}
\newcommand{\overbar}{\overline}
\begin{document}

\title[Large Deviations and Sum Rules]{Large Deviations and Sum Rules for Spectral Theory \\ ~ \\ \small{A Pedagogical Approach}}

\author[J.~Breuer, B.~Simon, and O.~Zeitouni]{Jonathan~Breuer$^{1,4}$, Barry~Simon$^{2,5}$,\\and
Ofer Zeitouni$^{3,6}$}

\thanks{$^1$ Institute of Mathematics, The Hebrew University of Jerusalem, Jerusalem, 91904, Israel.
E-mail: jbreuer@math.huji.ac.il}

\thanks{$^2$ Departments of Mathematics and Physics, Mathematics 253-37, California Institute of Technology, Pasadena, CA 91125.
E-mail: bsimon@caltech.edu}

\thanks{$^3$ Faculty of Mathematics, Weizmann Institute of Science, POB 26, Rehovot 76100, Israel and Courant Institute, NYU.
E-mail: ofer.zeitouni@weizmann.ac.il.}

\thanks{$^4$ Research supported in part by Israeli BSF Grant No. 2014337.}

\thanks{$^5$  Research supported in part by NSF grant DMS-1265592 and in part by Israeli BSF Grant No. 2014337.}

\thanks {$^6$ Research supported in part by a grant from the Israel Science Foundation.}

\

\date{\today}
\keywords{sum rules, large deviations, orthogonal polynomials}
\subjclass[2010]{60F10,35P05,42C05}

\begin{abstract} This is a pedagogical exposition of the large deviation approach to sum rules pioneered by Gamboa, Nagel and Rouault.  We'll explain how to use their ideas to recover the Szeg\H{o} and Killip--Simon Theorems.  The primary audience is spectral theorists and people working on orthogonal polynomials who have limited familiarity with the theory of large deviations.
\end{abstract}

\maketitle

%%%%%%%%%%%%%%%%%%%%%%%%%%%%%%%%%%%%%%%%%%%%%%%%%%%%%%%%%%%%%%
\section{Introduction and Sum Rules} \lb{s1}
%%%%%%%%%%%%%%%%%%%%%%%%%%%%%%%%%%%%%%%%%%%%%%%%%%%%%%%%%%%%%%

This note discusses a new approach to sum rules in the spectral theory of orthogonal polynomials on the unit circle (OPUC) and real line (OPRL).  We are given a probability measure, $d\mu$, on $\partial\bbD$ or $\bbR$ of the form:
\begin{equation}\label{1.1}
  d\mu(\theta) = w(\theta)\frac{d\theta}{2\pi}+d\mu_s \, (OPUC); \ d\mu(x) = w(x) dx +d\mu_s \, (OPRL)
\end{equation}
where $d\mu_s$ is singular w.r.t $d\theta$ or $dx$.  The recursion relations for OPUC are given for the monic OPs, $\{\Phi_n\}_{n=0}^\infty$, by
\begin{equation}\label{1.2}
  \Phi_{n+1}(z)=z\Phi_n(z) - \overbar{\alpha}_n \Phi^*_n(z); \quad \Phi_0 \equiv \bdone; \qquad \Phi^*_n(z) = z^n \overbar{\Phi_n\left(\frac{1}{\bar{z}}\right)}
\end{equation}
with Verblunsky coefficients $|\alpha_n| \leq 1$,
and for OPRL and the orthonormal polynomials, $\{p_n\}_{n=0}^\infty$, by
\begin{equation}\label{1.3}
  xp_n(x) = a_{n+1}p_{n+1}(x) + b_{n+1}p_n(x) + a_n p_{n-1}(x); \qquad p_{-1} \equiv 0
\end{equation}
with Jacobi parameters $b_n \in \mathbb{R}$, $a_n \geq 0$.

Verblunsky's Theorem (see \cite{OPUC1, SzThm}) says that there is a bijection $V: \prod_{n=0}^\infty \bbD \to \{\mu \textrm{ on } \partial\bbD \, | \, \mu(\partial\bbD) = 1; \textrm{ support of }\mu \textrm{ is infinite}\}$ that maps the Verblunsky coefficients, $\{\alpha_n\}_{n=0}^\infty$, to a measure with those Verblunsky coefficients.   Here $\bbD$ will denote the open unit disc in the complex plane, $\bbC$, and we say the support is infinite if it is not a finite set of points.  Similarly, for each $n$, there is a bijection, $V_n$, of $\left(\prod_{j=0}^{n-2} \bbD\right) \times \partial\bbD$ to probability measures on $\partial\bbD$ with exactly $n$ points in their support (i.e. $\alpha_j \in \bbD,  j=0,\dots,n-2; \, \alpha_{n-1} \in \partial\bbD$.)  Moreover, the maps are homeomorphisms if the $\alpha$'s are given the topology of pointwise convergence (product topology) and the measures the topology of vague convergence (i.e.\ weak topology as functionals on all continuous functions).

In some sense, it is natural to consider all measures at once and the corresponding set of possible Verblunsky coefficient sequences.  On this larger set, the topology is no longer a product topology (which it can't be since it is a union of products but not a product itself).
A Verblunsky coefficient sequence is a sequence $\alpha=\{\alpha_n\}_{n=0}^{N(\alpha)-1}$ with $N(\alpha) \leq \infty$.  If $N(\alpha)<\infty$ the sequence has $N(\alpha)$ elements with the last one in $\partial\bbD$ and the others in $\bbD$. The topology is metrizable and $\alpha^{(j)}$ converges to $\alpha^{(\infty)}$ by the following rule: If $N(\alpha^{(\infty)}) = \infty$, then we require that $N(\alpha^{(j)}) \to \infty$ and $\alpha^{(j)}_k \rightarrow \alpha_k^{(\infty)}$ for each fixed $k$ as $j \to \infty$.  If $N(\alpha^{(\infty)}) = N_0 < \infty$, we require that $\alpha^{(j)}_k \to \alpha^{(\infty)}_k$ for $k = 0,\dots,N_0-1$.  In this topology the space of Verblunsky coefficients sequences is compact and $V$ maps this space homeomorphically onto the space of all probability measures with the weak topology. This shows also that the topology on Verblunsky coefficients is metrizable.

Similarly, Favard's Theorem (see \cite{SzThm}) says that there is a bijection, $J: \prod_{n=1}^{\infty} \left(\bbR,(0,\infty)\right)$ with $\sup(|a_n|+|b_n|) < \infty \to \{\mu \textrm{ on } \bbR \,| \, \mu(\bbR) = 1, \, \supp
 \mu \textrm{ is infinite and bounded}\}$ that maps the Jacobi parameters $\{a_n,b_n\}_{n=1}^\infty$ to the measure whose recursion parameters in \eqref{1.3} are the given Jacobi parameters.  Again, there are bijections, $J_n$, mapping $\{a_j\}_{j=1}^{n-1}\cup \{b_j\}_{j=1}^n$ to $n$--point measures.  The question of continuity is a little subtle.  We note here that restricted to measures with support in $[-R,R]$ and the corresponding set of $a$'s and $b$'s, the map is a homeomorphism when the measures are given the weak topology and the Jacobi parameters the topology described in the last paragraph (with $\alpha_{N-1} \in \bbD$ replaced by $a_{N} = 0$).  We'll say more about the issue of topologies in $\textrm{(g)}$ of Section \ref{s5}.

We'll focus here on two of the simplest and most basic sum rules.  One is the Szeg\H{o}--Verblunsky sum rule for OPUC, basically Szeg\H{o}'s Theorem in
the form written down by Verblunsky \cite{Verb}(who was the first to prove the Theorem when $d\mu_s \ne 0$):
\begin{equation}\label{1.4}
  \sum_{j=0}^{\infty} -\log(1-|\alpha_j|^2) = - \int \log(w(\theta)) \frac{d\theta}{2\pi}
\end{equation}
Critical for applications to spectral theory, all terms in the sum are positive so one can prove and interpret this in all cases even when both sides are infinite.  In particular, the fact that two sides are finite simultaneously implies that
\begin{equation}\label{1.5}
  \sum_{j=0}^{\infty} |\alpha_j|^2 < \infty \iff \int \log(w(\theta)) \frac{d\theta}{2\pi} > -\infty
\end{equation}
\cite{SzThm} uses the term `spectral theory gem' for a result like this where there is a strict equivalence between conditions on the spectral data (i.e.\ the measure) and conditions on the recursion coefficients.

The other is the sum rule of Killip--Simon \cite{KS} for OPRL:
\begin{align}
  Q(\mu) &+ \sum_{ \mu\left(\{E_n\} \right)>0,  \ |E_n|>2} F(E_n) = \sum_{n=1}^{\infty} \left[\tfrac{1}{4}b_n^2 + \tfrac{1}{2} G(a_n) \right] \label{1.6} \\
\textrm{where} \nonumber\\
  Q(\mu) &= \frac{1}{4\pi}\int_{-2}^{2} \log\left(\frac{\sqrt{4-x^2}}{2\pi w(x)}\right) \sqrt{4-x^2} \, dx \label{1.7} \\
  F(E) &= \tfrac{1}{2} \int_{2}^{|E|} \sqrt{x^2-4} \, dx \label{1.8} \\
  G(a) &= a^2 - 1 - \log(a^2) \label{1.9}
\end{align}
with the condition that $Q(\mu) = \infty$ unless $\{x \,| \, w(x) \ne 0\} = [-2,2]$ (up to sets of Lebesgue measure zero).

Again, all terms in \eqref{1.6} are positive.  Moreover $G(a) = 2(a-1)^2+\textrm{O}((a-1)^3), \, F(E) = \tfrac{2}{3}(|E|-2)^{3/2}+\textrm{O}((|E|-2)^{5/2})$ which means that \eqref{1.6} implies the gem (Killip--Simon Theorem) that
\begin{equation}\label{1.8a}
\begin{split}
  \sum_{n=1}^{\infty} (a_n-1)^2 &+ b_n^2 < \infty \\
&\iff \\
 \textrm{ess supp }(d\mu) = [-2,2], \,
   Q(\mu) &< \infty   \textrm{ and } \sum_{m} (|E_m|-2)^{3/2} < \infty
\end{split}
\end{equation}
where $\textrm{ess supp }(d\mu)$ is the support of $d\mu$ with isolated atoms removed.

There are many proofs of Szeg\H{o}'s Theorem and so of \eqref{1.4} (see Chapter 2 of \cite{OPUC1}) but until recently, essentially one proof of the Killip--Simon sum rule -- the original proof of \cite{KS} or variants (e.g. \cite{SiMero} or \cite{SzThm}).  There is a simple analog of this proof for Szeg\H{o}'s Theorem (see \cite[Section 2.7]{SzThm}).

This proof starts by showing a so--called step--by--step sum rule.  One then uses positivity of the terms and semi--continuity of the entropy (an idea appearing already in Verblunsky \cite{Verb}) to get the full sum rule.  The step--by--step rules come from a Jensen formula (e.g. \cite[Section 9.8]{CAA}) for suitable functions on the disk.  The value and derivatives at zero involve recursion coefficients and the integral over $\partial\bbD$ the measure side. By taking derivatives at $0$ of a Poisson--Jensen formula, one actually gets an infinite set of step--by--step sum rules related to work of Case \cite{Case} in the Jacobi matrix situation.  The sum rule involving the $n$th derivative is called $C_n$ by Killip--Simon.  None of the $C_n$ sum rules have positive terms but Killip--Simon discovered that $C_0 + \tfrac{1}{2} C_2$ has positive terms and that yielded \eqref{1.6}.  For a long time, there was no explanation for why this turned out to be positive other than good luck.  Work of Nazarov et al \cite{NPVY} (see Denissov--Kupin \cite{DK} for OPUC) shed some light on the positivity condition but it was still unclear why certain combinations of the Taylor coefficients involved are positive. Moreover, the functions involved in the sum rules (e.g., $Q, F,$ and $G$ of \eqref{1.7}--\eqref{1.9} above) seem to simply `pop out' of these combinations and their significance was generally unclear.

Recently Gamboa, Nagel and Rouault \cite{GNR1} (henceforth GNR; they rely on an earlier work of two of those authors \cite{GR1} and have further papers \cite{GNR2,GNR3}) changed this situation.  Using the theory of large deviations, they found a new proof of the Szeg\H{o}--Verblunsky and Killip--Simon sum rules.  One punch line of their work is that the Szeg\H{o}--Verblunsky sum rule follows from a large deviation principle for the spectral measures of the Circular Unitary Ensemble (CUE) of random matrix theory and that the Killip--Simon sum rule follows from a large deviation principle for the spectral measures of the Gaussian Orthogonal Ensemble (GOE). The paper \cite{GNR1} is written for experts in probability theory and is somewhat opaque to spectral theorists. This expository paper is intended to give spectral theorists and people working on orthogonal polynomials access to this work of importance to them.  It may also be useful to probabilists who want to understand the context of the results.

Ignoring technical issues for now, a family of measures, $\{d\bbP_N\}_N$ on a space, $X$, is said to obey a large deviation principle (LDP) if the probability that $x$ is near $x_0$ goes roughly as $e^{-NI(x_0)}$ as $N \to \infty$.  $I(x)$ is called the rate function
and is necessarily non-negative.  Suppose now that $\{d\bbP_N\}$ is a family of probability measures on the set of probability measures on $\partial\bbD$.  Then the bijection $V$ drags these measures to a set of measures, $\widetilde{d\bbP}_N$, on the set of possible Verblunsky coefficients (i.e. on $\prod_{j=0}^{\infty} \bbD \cup \bigcup_{k=1}^\infty (\bbD^{k-1} \cup \partial\bbD)$).  Since LDP is defined in terms of ``nearby'' and $V$ is a homeomorphism, the $d\bbP_N$ obey a LDP with rate function, $I_M(\mu)$, if and only if the $\widetilde{d\bbP}_N$ obey a LDP with rate function $I_V(\alpha)$ where
\begin{equation}\label{1.10}
  I_V(\alpha) = I_M(\mu) \qquad \quad \textrm{whenever } \mu = V(\alpha)
\end{equation}
$V(\alpha)$ is the measure associated to $\alpha$ so \eqref{1.10} is precisely an equality of a function of the recursion coefficients and a function of its associated measure, i.e.\ a sum rule.  Moreover, since
a rate function is necessarily non-negative,
both sides of \eqref{1.10} are positive, so this is a way of generating \emph{positive} sum rules.

This approach illuminates why the various quantities in the Killip--Simon sum rule, that in their proof occur by ad hoc considerations, arise naturally.  
The $\sqrt{4-x^2}/2\pi$ in their quasi--Szeg\H{o} condition (i.e., in $Q$ of \eqref{1.7}) is just the Wigner semi--circle density for GOE.  The function $G$ is just the large deviations rate function for averages of exponential random variables (see \eqref{2.7B} below) and the function $F$ is just the external potential for Coulomb interaction in an external quadratic field (see (d) of Section \ref{s5}).

In Section \ref{s2}, we present a quick overview of the theory of large deviations.  In Section \ref{s3}, we'll compute the measure side of the LDP that yields the Szeg\H{o}--Verblunsky sum rule and in Section \ref{s4},
the coefficient side.  Section \ref{s5}
describes both sides of the LDP that leads to the Killip--Simon sum rule and
Section \ref{s6} has some remarks on further developments.
The Appendix presents an inductive proof to a formula by Killip and Nenciu (\cite{KN} and Theorem \ref{T4.2} below) regarding the probability distribution of the Verblunsky coefficients of a Haar distributed unitary matrix. 

We thank Peter Yuditskii for telling two of us (JB and BS) about \cite{GNR1} and Fritz Gesztesy for encouraging us to write a pedagogic note.

%%%%%%%%%%%%%%%%%%%%%%%%%%%%%%%%%%%%%%%%%%%%%%%%%%%%%%%%%%%%%%
\section{Large Deviations} \lb{s2}
%%%%%%%%%%%%%%%%%%%%%%%%%%%%%%%%%%%%%%%%%%%%%%%%%%%%%%%%%%%%%%

Here we'll sketch some of the key ideas in the theory of large deviations.  Two books on the subject are Deuschel--Stroock \cite{DS} and Dembo--Zeitouni \cite{DZ}.  As both books point out, there is not so much a theory as a collection of powerful tools.  While the subject has roots going back to Laplace, the modern framework goes back to Varadhan, Donsker--Varadhan and
Freidlin--Wentzel in the 1960's and 1970's.

Let $\{\bbP_N\}_{N=1}^\infty$ be a sequence of probability measures on a Polish space, $X$ (see Simon \cite[Section 4.14]{RA} or Billingsley \cite{Bill} for measure theory on Polish spaces).  Let $I$ be a non--negative function on $X$ and $\{a_N\}_{N=1}^\infty$ a sequence of positive numbers with $a_N \to \infty$.  We say that $\{\bbP_N\}_{N=1}^\infty$ obeys a LDP with rate function, $I$, and speed $\{a_N\}_{N=1}^\infty$ if and only if

\begin{SL}
  \item[(1)] $I$ is non-negative and lower semicontinuous on $X$

  \item[(2)] For every open set, $U \subset X$, we have that
  \begin{equation}\label{2.1}
    \liminf_{N \to \infty} \frac{1}{a_N} \log \bbP_N(U) \ge - \inf_{x \in U} I(x)
  \end{equation}

  \item[(3)] For every closed set, $K \subset X$, we have that
  \begin{equation}\label{2.1closed}
    \limsup_{N \to \infty} \frac{1}{a_N}\log \bbP_N(K) \le - \inf_{x \in K} I(x)
  \end{equation}
\end{SL}

We note that the rate function is uniquely determined by these conditions because lower semicontinuity implies that $I(x_0)
= \lim_B \inf_{x \in B} I(x)$ where the limit is over the directed set of all open (or over all closed) neighborhoods of $x_0$, directed by inverse inclusion.  A rate function is called \emph{good} if for each positive $\alpha$, $\{x \,| \, I(x) \le \alpha\}$ is compact (note that by the lower semicontinuity, this set is always closed).  The following two elementary results, whose proofs we leave to the readers, will be useful.

\begin{theorem} \lb{T2.1} Let $a_N = N^\ell$ (for $\ell > 0$).  Fix $N_0$.  Then $\{\bbP_{N+N_0}\}_{N=1}^\infty$ obeys a LDP with speed $a_N$ and rate function, $I$, if and only if $\{\bbP_{N}\}_{N=1}^\infty$ does.
\end{theorem}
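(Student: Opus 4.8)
The plan is to reduce the statement to one elementary fact: since $N_0$ is fixed, $(N+N_0)^\ell/N^\ell\to 1$ as $N\to\infty$, so replacing $\bbP_N$ by $\bbP_{N+N_0}$ only rescales the speed by a factor tending to $1$, and such a factor is invisible to the $\liminf$/$\limsup$ conditions \eqref{2.1} and \eqref{2.1closed}.

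First, condition (1) is a property of $I$ alone, so it holds for $\{\bbP_{N+N_0}\}_{N=1}^\infty$ exactly when it holds for $\{\bbP_N\}_{N=1}^\infty$, and nothing need be checked there. For the bounds \eqref{2.1} and \eqref{2.1closed}, I would note that they involve only $\liminf$/$\limsup$ as $N\to\infty$ and so are unaffected by discarding finitely many terms of a sequence; reindexing by $M=N+N_0$ therefore gives, for every open $U$,
\[
  \liminf_{N\to\infty}\frac{1}{(N+N_0)^\ell}\log\bbP_{N+N_0}(U)=\liminf_{M\to\infty}\frac{1}{M^\ell}\log\bbP_M(U),
\]
and likewise with $\limsup$ and a closed set $K$. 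Hence $\{\bbP_N\}_{N=1}^\infty$ obeys the LDP with speed $N^\ell$ and rate $I$ if and only if $\{\bbP_{N+N_0}\}_{N=1}^\infty$ obeys the LDP with speed $(N+N_0)^\ell$ and the same rate $I$.

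It then remains to show that $\{\bbP_{N+N_0}\}_{N=1}^\infty$ obeys the LDP with speed $(N+N_0)^\ell$ if and only if it does with speed $N^\ell$, both with rate $I$. For this I would invoke the elementary fact that if $c_N\to 1$ with $c_N>0$ and $(d_N)$ is a sequence in $[-\infty,0]$, then $\liminf_N c_N d_N=\liminf_N d_N$ and $\limsup_N c_N d_N=\limsup_N d_N$. Applying it with $c_N=(N+N_0)^\ell/N^\ell$ (or its reciprocal) and $d_N$ the corresponding normalized logarithm, which lies in $[-\infty,0]$ because $\bbP_{N+N_0}(U)\le 1$, turns \eqref{2.1} at one speed into \eqref{2.1} at the other; the $\limsup$ statement does the same for \eqref{2.1closed}.

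There is no real obstacle; the only point needing a little care is the sign bookkeeping in that auxiliary fact, where one uses crucially that the logarithm of a probability is $\le 0$ so that multiplying by a factor slightly above or below $1$ moves the quantity in a controlled direction, and where the value $-\infty$ (the case $\bbP_{N+N_0}(U)=0$) must be allowed. Proving the auxiliary fact is a routine $\veps$--$\delta$ argument: a subsequence realizing $\liminf_N d_N$ gives one inequality, and for the reverse one notes that eventually $d_N\ge \liminf_N d_N-\veps$ with $d_N\le 0$, so $c_N\in(1-\delta,1+\delta)$ forces $c_N d_N\ge(1+\delta)(\liminf_N d_N-\veps)$, whence letting $\delta,\veps\to 0$ finishes it (the case $\liminf_N d_N=-\infty$ being trivial).
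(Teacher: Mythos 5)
Your proof is correct, and it is exactly the argument the paper has in mind: the paper leaves this result to the reader, but its accompanying remark ("all that is needed is that $\lim_{N\to\infty} a_{N+N_0}/a_N=1$") points precisely to your reindexing-plus-speed-comparison argument. The auxiliary fact about $\liminf c_N d_N$ for $c_N\to 1$ and $d_N\in[-\infty,0]$ is handled correctly, including the $-\infty$ case.
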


\begin{remark} All that is needed is that $\lim_{N \to \infty} \frac{a_{N+N_0}}{a_N} = 1$.
\end{remark}

\begin{theorem} \lb{T2.2} Let $U \subset \bbR^\nu$ be open.  Let $G$ be continuous on $U$ with $\lim_{x \to \partial U \cup \{\infty\}} G(x) = \infty$ and $\inf_{x \in U} G(x) = 0$.  Let $F \in L^1(\bbR^\nu)$ with $\supp  F \subset \overbar{U}, F\ge 0$ and $\inf_{x \in K} F(x) > 0$ for all compact $K \subset U$.  Let $d\bbP_N(x) = Z_N^{-1} e^{-NG(x)} F(x) d^\nu x$ where $Z_N = \int  e^{-NG(x)} F(x) d^\nu x$.  Then $\bbP_N$ obeys a LDP with speed $N$ and good rate function $G$.
\end{theorem}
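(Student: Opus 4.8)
The plan is to extend $G$ to all of $X=\mathbb{R}^{\nu}$ by declaring $G\equiv+\infty$ off $U$ (consistent with $\lim_{x\to\partial U\cup\{\infty\}}G(x)=\infty$), to check that this extended $G$ is a good rate function, and then to obtain both LDP bounds by crude estimates on $\int_{A}e^{-NG}F\,d^{\nu}x$ followed by division by $Z_{N}$. First one records that $0<Z_{N}<\infty$: finiteness follows from $G\ge0$ (which is just $\inf_{U}G=0$) and $F\in L^{1}$, so $Z_{N}\le\|F\|_{1}$; positivity comes from choosing a closed ball $\overline{B}\subset U$ on which $F\ge\delta>0$ (the hypothesis on $F$) and $G\le M<\infty$ (continuity on $U$), giving $Z_{N}\ge\delta|B|e^{-NM}>0$. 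Hence $\mathbb{P}_{N}$ is well defined, and $\tfrac{1}{N}\log\mathbb{P}_{N}(A)=-\tfrac{1}{N}\log Z_{N}+\tfrac{1}{N}\log\int_{A}e^{-NG}F\,d^{\nu}x$; moreover the integrand $e^{-NG}F$ vanishes a.e.\ off $U$ (being $0$ where $G=\infty$ on $\partial U$ and where $F=0$ outside $\overline{U}$), so $\int_{A}e^{-NG}F=\int_{A\cap U}e^{-NG}F$.

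Next I would verify that the extended $G$ is a good rate function. Non-negativity holds by the above. For lower semicontinuity it suffices that every sublevel set $K_{\alpha}:=\{x\in U:\ G(x)\le\alpha\}$ be closed in $X$, and I would in fact show $K_{\alpha}$ is compact, which also gives goodness: boundedness follows from $G(x)\to\infty$ as $|x|\to\infty$, and if $x_{n}\in K_{\alpha}$ with $x_{n}\to x$ then $x\in\overline{U}$, while $x\in\partial U$ is impossible because then $G(x_{n})\to\infty$, so $x\in U$ and, by continuity, $G(x)\le\alpha$.

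For the upper bound, let $C$ be closed and $c:=\inf_{C}G$; since $G\ge c$ on $C$ we get $\int_{C}e^{-NG}F\le e^{-Nc}\|F\|_{1}$ (this covers $c=\infty$ as well, when the integral is $0$), hence $\limsup\tfrac{1}{N}\log\int_{C}e^{-NG}F\le-c$. For the lower bound, let $V$ be open and $c:=\inf_{V}G$. If $c=\infty$ then $V\cap U=\emptyset$ and $\int_{V}e^{-NG}F=0$, so there is nothing to prove; otherwise pick $x_{0}\in V$ with $G(x_{0})<c+\epsilon$, note $x_{0}\in U$, and use continuity of $G$ together with local positivity of $F$ to find a ball $B\ni x_{0}$ with $\overline{B}\subset V\cap U$, $G<c+2\epsilon$ on $B$, and $F\ge\delta>0$ on $B$; then $\int_{V}e^{-NG}F\ge\delta|B|e^{-N(c+2\epsilon)}$, so $\liminf\tfrac{1}{N}\log\int_{V}e^{-NG}F\ge-(c+2\epsilon)$, and we let $\epsilon\downarrow0$. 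Taking $V=U$ in the lower bound gives $\liminf\tfrac{1}{N}\log Z_{N}\ge-\inf_{U}G=0$, which together with $Z_{N}\le\|F\|_{1}$ forces $\tfrac{1}{N}\log Z_{N}\to0$. Substituting this into the displayed identity for $\tfrac{1}{N}\log\mathbb{P}_{N}(A)$ turns the two estimates above into precisely \eqref{2.1} and \eqref{2.1closed}.

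I do not expect a genuine obstacle here — this is Laplace's method (a special case of Varadhan's lemma), and the argument is elementary and self-contained. The only points needing care are bookkeeping: making the $+\infty$-extension of $G$ consistent with all the hypotheses so that the degenerate cases $\inf_{C}G=\infty$ and $V\cap U=\emptyset$ are handled uniformly, and noting that $\partial U$ never contributes to the integrals because $e^{-NG}=0$ there. The sole place the boundary/infinity blow-up of $G$ is genuinely used is in the compactness of the sublevel sets $K_{\alpha}$; the LDP inequalities themselves require only continuity of $G$ on $U$, $\inf_{U}G=0$, $F\in L^{1}$, and local positivity of $F$.
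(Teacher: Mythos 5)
Your proof is correct and complete; the paper explicitly leaves the proofs of Theorems \ref{T2.1} and \ref{T2.2} to the reader, and your argument is exactly the intended elementary Laplace-method estimate (upper bound via $\int_C e^{-NG}F\le e^{-N\inf_C G}\|F\|_1$, lower bound via a small ball where $G<\inf_V G+2\epsilon$ and $F\ge\delta$, plus $\tfrac1N\log Z_N\to 0$). The bookkeeping points you flag --- extending $G$ by $+\infty$ off $U$, compactness of sublevel sets from the blow-up at $\partial U\cup\{\infty\}$, and the degenerate cases $\inf=\infty$ --- are handled correctly.
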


\begin{remark} The assumptions are overly strong but suffice for the applications we make below.
\end{remark}

A basic result is:

\begin{theorem} [Cram\'{e}r's Theorem] \lb{T2.3} Given a real random variable, $\xi$, let
\begin{equation}\label{2.3}
  \Lambda(\lambda) = \log \mathbb{E}(e^{\lambda\xi})
\end{equation}
be its cumulant generating function and
\begin{equation}\label{2.4}
  I(x)  = \sup_{\lambda \in \bbR} \left(\lambda x - \Lambda(\lambda)\right)
\end{equation}
its Legendre transform.  Let $\bbP_N$ be the probability distribution for $N^{-1} S_N \equiv N^{-1} (X_1+\cdots +X_N)$, where $\{X_j\}_{j=1}^\infty$ are independent copies of $\xi$.  Then $\bbP_N$ obeys a LDP with speed $N$ and good rate function $I$.
\end{theorem}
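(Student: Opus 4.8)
The plan is to prove the upper and lower bounds \eqref{2.1closed}, \eqref{2.1} separately---the former from a Chernoff bound, the latter from an exponential change of measure---and then to verify directly that $I=\Lambda^{*}$ is a good rate function; the argument is classical (see \cite{DZ}). Write $\mu$ for the law of $\xi$ and $m=\bbE(\xi)$; we may assume $\Lambda$ is finite in a neighborhood of $0$, which holds in all the applications below and is, as we shall see, exactly what forces $I$ to be \emph{good}. Then $\Lambda$ is convex (H\"older), $\Lambda(0)=0$ and $\Lambda(\lambda)\ge\lambda m$ (Jensen), so $I$---a supremum of affine functions---is convex and lower semicontinuous with $I\ge 0$, $I(m)=0$, and, by convexity, $I$ is non-increasing on $(-\infty,m]$ and non-decreasing on $[m,\infty)$.

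For the upper bound, fix $x>m$. For $\lambda\ge 0$, Markov's inequality applied to $e^{\lambda S_{N}}$ gives
\begin{equation*}
\bbP_{N}\bigl([x,\infty)\bigr)=\bbP\bigl(e^{\lambda S_{N}}\ge e^{\lambda Nx}\bigr)\le e^{-\lambda Nx}\,\bbE\bigl(e^{\lambda S_{N}}\bigr)=e^{-N(\lambda x-\Lambda(\lambda))};
\end{equation*}
since $\lambda x-\Lambda(\lambda)\le\lambda(x-m)<0$ for $\lambda<0$, optimizing over $\lambda\ge 0$ is the same as over all $\lambda$, so $\tfrac1N\log\bbP_{N}([x,\infty))\le -I(x)$, and symmetrically $\tfrac1N\log\bbP_{N}((-\infty,x])\le-I(x)$ for $x<m$. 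For closed $K\subset\bbR$: if $m\in K$ then $\inf_{K}I=0$ and there is nothing to prove; otherwise set $x_{+}=\inf(K\cap[m,\infty))$ and $x_{-}=\sup(K\cap(-\infty,m])$ (both attained), observe $K\subset(-\infty,x_{-}]\cup[x_{+},\infty)$, and combine the two half-line bounds with $\log(a+b)\le\log 2+\log\max(a,b)$ and the monotonicity of $I$ to conclude $\limsup\tfrac1N\log\bbP_{N}(K)\le-\min\{I(x_{-}),I(x_{+})\}=-\inf_{K}I$.

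For the lower bound it suffices to show that for every $x$ and $\delta>0$, $\liminf\tfrac1N\log\bbP_{N}\bigl((x-\delta,x+\delta)\bigr)\ge-I(x)$: given open $U\ni x$, choose $\delta$ with $(x-\delta,x+\delta)\subset U$, apply this, let $\delta\downarrow 0$, and then take the supremum over $x\in U$. In the main case, $x$ lies in the interior of the convex hull of $\supp(\mu)$ and in the range of $\Lambda'$ on the interior of $\{\Lambda<\infty\}$, so the supremum defining $I(x)$ is attained at an interior point $\lambda^{*}$ with $\Lambda'(\lambda^{*})=x$ and $\Lambda''(\lambda^{*})<\infty$. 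Introduce the tilted law $d\mu^{*}(y)=e^{\lambda^{*}y-\Lambda(\lambda^{*})}\,d\mu(y)$, which has mean $\Lambda'(\lambda^{*})=x$ and finite variance $\Lambda''(\lambda^{*})$, and let $\bbP^{*},\bbE^{*}$ refer to the setting where the $X_{j}$ are i.i.d.\ samples from $\mu^{*}$. Undoing the tilt and using $-\lambda^{*}S_{N}\ge-N\lambda^{*}x-N|\lambda^{*}|\delta$ on $\{|S_{N}/N-x|<\delta\}$,
\begin{equation*}
\bbP_{N}\bigl((x-\delta,x+\delta)\bigr)=e^{N\Lambda(\lambda^{*})}\,\bbE^{*}\!\bigl[e^{-\lambda^{*}S_{N}}\bdone_{\{|S_{N}/N-x|<\delta\}}\bigr]\ge e^{-N(I(x)+|\lambda^{*}|\delta)}\,\bbP^{*}\!\bigl(|S_{N}/N-x|<\delta\bigr);
\end{equation*}
by the weak law of large numbers under $\mu^{*}$, $\bbP^{*}(|S_{N}/N-x|<\delta)\to 1$, whence $\liminf\tfrac1N\log\bbP_{N}((x-\delta,x+\delta))\ge-I(x)-|\lambda^{*}|\delta$, which gives what we want after $\delta\downarrow 0$. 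The remaining $x$---an atom of $\mu$ at an endpoint of $\supp(\mu)$ (where $I(x)=-\log\mu(\{x\})$ and $\bbP_{N}(S_{N}/N=x)=\mu(\{x\})^{N}$ already suffices), the non-steep gap in the range of $\Lambda'$ (handled by truncating $\xi$ to $[-M,M]$ and letting $M\to\infty$), a point outside the convex hull of $\supp(\mu)$ (where $I(x)=+\infty$), and $x=m$ (the weak law, the case $\lambda^{*}=0$)---are dealt with by these standard devices.

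Finally, $I$ is convex, lower semicontinuous and $\ge 0$; picking $\lambda_{-}<0<\lambda_{+}$ with $\Lambda(\lambda_{\pm})<\infty$ gives $I(x)\ge\max\{\lambda_{+}x-\Lambda(\lambda_{+}),\,\lambda_{-}x-\Lambda(\lambda_{-})\}\to+\infty$ as $|x|\to\infty$, so each sublevel set $\{I\le\alpha\}$ is bounded, hence (closed by lower semicontinuity) compact, i.e.\ $I$ is good. I expect the main obstacle to be precisely the lower bound in the degenerate cases listed above: the change-of-measure argument is transparent when $x$ is interior to the convex hull of $\supp(\mu)$ and the optimal tilt $\lambda^{*}$ is interior to $\{\Lambda<\infty\}$, but checking that the edge atoms, the non-steep gap and the exterior of the support account for everything else---and that they mesh with the convexity and monotonicity of $I$ used in the upper bound---is where the real care lies. (Without the assumption that $\Lambda$ is finite near $0$ the LDP still holds, but then $I\equiv 0$ is not good; this does not occur in the applications of this paper.)
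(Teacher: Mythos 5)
Your proof is correct and follows essentially the same route as the paper's sketch: a Chernoff--Markov bound reduced to half-lines via the monotonicity of $I$ on either side of the mean for the upper bound, and an exponential change of measure plus the law of large numbers for the lower bound. The only differences are that you work under the weaker hypothesis that $\Lambda$ is finite merely near $0$ and explicitly flag the degenerate cases of the lower bound (the paper instead assumes $\Lambda$ everywhere finite and $\xi$ with full range, deferring the general case to \cite{DZ}), and that you verify the goodness of $I$ explicitly, which the paper leaves unsaid.
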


\begin{remark} By Jensen's inequality, $\Lambda$ is convex and, as a $\sup$ of linear functions, so is $I(x)$.  If $\Lambda$ is everywhere finite, it is $C^1$, $I(\bar{x}) = 0$ where $\bar{x} =\mathbb{E}(\xi)$ and $I(x) > 0$ for $x \ne \bar{x}$.  Thus this result amplifies the law of large numbers.
\end{remark}

\begin{proof} [Sketch]  We sketch the proof in case $\Lambda(\lambda)<\infty$ for all $\lambda$ and the image of $\xi$ is the whole real line; see \cite{DZ} for the full result. Under these conditions, a bit of calculus and convex analysis shows that $I(x)$ is non-negative, strictly convex,
with $I(\bar x)=0$ where $\bar x=\mathbb{E} (\xi)$, and further that
\begin{equation} \lb{2.5}
	I(x)=x\lambda_x-\Lambda(\lambda_x)\, \mbox{\rm
		with} \, \Lambda'(\lambda_x)=x\, \mbox{\rm and
		$\lambda_x \geq 0$ iff $x\geq \bar x$}
\end{equation}
	(The key point in \eqref{2.5} is that a solution to the equation $\Lambda'(\lambda_x)=x$ exists; it is mostly here that we use the assumptions that the image of $\xi$ is $\bbR$ and that $\mbox{\rm Dom}(\Lambda)=\mathbb{R}$.)

	To see the large deviations upper bound, we note first that the strict convexity of $I(\cdot)$ implies that the latter is strictly monotone increasing (resp. decreasing) on $[\bar x,\infty)$ (resp. $(-\infty,\bar x]$)). It is therefore enough to 	prove the upper bound on intervals of the form
$(-\infty,x]$ (with $x<\bar x$) or $[x,\infty)$ (with $x>\bar x$). Considering the latter, we have, with $\lambda\geq 0$,
\begin{equation*}
  \bbP(S_n/n\geq x)\leq \bbE(e^{-n\lambda x+\lambda S_n})= e^{-n(\lambda x-\Lambda(\lambda))}
\end{equation*}
where the independence of the $X_i$'s was used in the last equality.  Choosing $\lambda=\lambda_x$ completes the proof of the large deviations upper bound.

To see the lower bound, it is enough to show that
	\begin{equation} \label{2.6}
		\lim_{\delta\to 0} 	\liminf_{n\to\infty} \frac{1}{n} \log \bbP(S_n/n \in (x-\delta,x+\delta))
	=-I(x).
\end{equation}
To see the latter, introduce the probability distribution $\nu$ by setting
$d\nu/d\mathbb{P}_1(y)=e^{\lambda_x y-\Lambda(\lambda_x)}$.  Then,
\begin{align*}
 \bbE_\nu(X_1) &= \int y e^{\lambda_x y - \Lambda(\lambda_x)} d\mathbb{P}_1(y) \\
                &= e^{-\Lambda(\lambda_x)} \frac{d}{d\lambda} \left. e^{\Lambda(\lambda)} \right|_{\lambda=\lambda_x} \\
                &= \Lambda'(\lambda_x) = x
\end{align*}
so, by \eqref{2.5},
\begin{align}
	\mathbb{P}(S_n/n\in (x-\delta,x+\delta))&= \int_{\sum_{i=1}^n x_i/n\in (x-\delta,x+\delta)} \prod_{i=1}^n	d\mathbb{P}_1(x_i) \nonumber\\
	                               &= \int_{\sum_{i=1}^n x_i/n\in (x-\delta,x+\delta)} e^{-\lambda_x \sum_{i=1}^n x_i+n\Lambda(\lambda_x)} \prod_{i=1}^n	d\nu(x_i) \nonumber\\
	                               &\geq e^{-n(\lambda_x\delta +x \lambda_x- \Lambda(\lambda_x))} \int_{\sum_{i=1}^n x_i/n\in (x-\delta,x+\delta)} \prod_{i=1}^n	d\nu(x_i) \nonumber\\
	                               &= e^{-n(\lambda_x\delta +I(x))} \int_{\sum_{i=1}^n x_i/n\in (x-\delta,x+\delta)} \prod_{i=1}^n	d\nu(x_i) 	\label{2.7}
\end{align}
Since $E_\nu(X_1)=x$, the law of large numbers implies that for any $\delta>0$, the last integral in \eqref{2.7} converges to $1$ as $n\to\infty$.
Taking the limits $n\to\infty$ first and then $\delta\to 0$ completes the proof of the lower bound.
\end{proof}

\begin{example} \lb{E2.3A} Relevant to our considerations later is the average of exponential random variables.  So let $\{X_j\}_{j=1}^\infty$ be independent, identically distributed random variables (iidrv) with density $\chi_{[0,\infty)}(x) e^{-x} dx$.  The cumulant generating function is
\begin{equation}\label{2.7A}
  \Lambda(\lambda) = \log\left(\int_{0}^{\infty} e^{\lambda x} e^{-x} \, dx \right) =
  \begin{cases}
    -\log(1-\lambda), & \mbox{if } \lambda<1 \\
    \infty, & \mbox{if } \lambda \ge 1
  \end{cases}
\end{equation}
For $x \le 0$, taking $\lambda \to -\infty$ in $\lambda x - \Lambda(\lambda)$, we see that $I(x) = \infty$.  If $x>0$, the $\lambda$ derivative of $\lambda x - \Lambda(\lambda)$ vanishes at $\lambda = 1 - x^{-1}$ at which point $\lambda x - \Lambda(\lambda)$ has the value $x-1-\log(x)$.  Thus
\begin{equation}\label{2.7B}
\varphi(x) \equiv I(x) =
\begin{cases}
  x-1 - \log(x), & \mbox{if } x>0 \\
  \infty, & \mbox{if } x \le 0
\end{cases}
\end{equation}
is the (good) rate function.  It is no coincidence as we'll see that the function of \eqref{1.9} is $G(a) = \varphi(a^2)$.  We summarize the combination of this calculation and Cram\'{e}r's Theorem in the theorem below.  The gamma distribution with parameters $\alpha, \beta$ ($\alpha, \beta>0$) is the measure given by 
\begin{equation}\label{2.7Ba}
  dG_{\alpha,\beta}(x) = \frac{\beta ^{\alpha }x^{\alpha -1}e^{-x\beta }}{\Gamma (\alpha )}\,\chi_{[0,\infty)}(x)\,dx
\end{equation}
For exponential iidrv, $n^{-1}\sum_{j=1}^{n} X_j$ has distribution $G_{n,n}$, so this example allows one to also read off a LDP for suitable gamma distributions.
\end{example}

\begin{theorem} \lb{T2.3B} Let $\ell_N$ be integers with
\begin{equation}\label{2.7C}
  \lim_{N \to \infty} N^{-1} \ell_N = \alpha > 0
\end{equation}
Then $Y_N \equiv N^{-1} \sum_{j=1}^{\ell_N} X_j$ with $X_j$ iid exponential random variables obeys a LDP with speed $N$ and rate function
\begin{equation}\label{2.7D}
  \varphi_\alpha (y) \equiv \alpha \varphi(y/\alpha) = y - \alpha - \alpha \log(y/\alpha)
\end{equation}
\end{theorem}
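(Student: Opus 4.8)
The plan is to reduce everything to Cram\'er's Theorem (Theorem~\ref{T2.3}) in the form worked out in Example~\ref{E2.3A}, and then to transport the resulting LDP through the elementary rescaling that relates $Y_N$ to a genuine empirical mean. Concretely: by Theorem~\ref{T2.3} together with \eqref{2.7B}, the sequence $Z_M := M^{-1}\sum_{j=1}^M X_j$ obeys an LDP with speed $M$ and good rate function $\varphi$. First I would observe that, since $\ell_N\to\infty$, the bounds \eqref{2.1} and \eqref{2.1closed} for $\{\mathrm{law}(Z_M)\}$ specialize from the full family of speeds $M$ to the subsequence $M=\ell_N$: for any real sequence $f(M)$ one has $\liminf_N f(\ell_N)\ge\liminf_M f(M)$ and $\limsup_N f(\ell_N)\le\limsup_M f(M)$ whenever $\ell_N\to\infty$ (no monotonicity needed). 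Then, since $\tfrac1N\log\mathbb{P}(\cdot)=\tfrac{\ell_N}{N}\cdot\tfrac1{\ell_N}\log\mathbb{P}(\cdot)$ with $\tfrac{\ell_N}{N}\to\alpha>0$ while $\tfrac1{\ell_N}\log\mathbb{P}(\cdot)\le0$, the speed-$\ell_N$ LDP for $Z_{\ell_N}$ upgrades to a speed-$N$ LDP for $Z_{\ell_N}$ with rate function $\alpha\varphi$; goodness is preserved because $\{\alpha\varphi\le L\}=\{\varphi\le L/\alpha\}$. This is the step requiring a little care: multiplying a nonpositive quantity whose $\liminf$ (resp.\ $\limsup$) is controlled by a factor converging to $\alpha>0$ pulls out exactly the factor $\alpha$, provided one keeps the inequalities pointing the right way and disposes of the harmless case $\inf\varphi=+\infty$ separately.

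With the speed-$N$ LDP for $Z_{\ell_N}$ (rate $\alpha\varphi$) in hand, I would finish by writing $Y_N=\tfrac{\ell_N}{N}Z_{\ell_N}$ and comparing it with $\alpha Z_{\ell_N}$. These are exponentially equivalent at speed $N$: since $Z_{\ell_N}\ge0$, for every $\delta>0$ one has $\mathbb{P}\big(|Y_N-\alpha Z_{\ell_N}|>\delta\big)=\mathbb{P}\big(Z_{\ell_N}>\delta/|\tfrac{\ell_N}{N}-\alpha|\big)$, and because $\delta/|\tfrac{\ell_N}{N}-\alpha|\to\infty$ while the speed-$N$ upper bound gives $\limsup_N\tfrac1N\log\mathbb{P}(Z_{\ell_N}\ge R)\le-\alpha\varphi(R)$ with $\varphi(R)\to\infty$, this probability decays faster than any $e^{-cN}$. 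By the contraction principle applied to the continuous map $x\mapsto\alpha x$, $\alpha Z_{\ell_N}$ obeys an LDP with speed $N$ and good rate function $y\mapsto\inf\{\alpha\varphi(x):\alpha x=y\}=\alpha\varphi(y/\alpha)=\varphi_\alpha(y)$, and by the exponential equivalence lemma (see \cite{DZ}) so does $Y_N$. Lower-semicontinuity, non-negativity and goodness of $\varphi_\alpha$, together with $\varphi_\alpha(y)=+\infty$ for $y\le0$, are immediate from $\varphi_\alpha(y)=\alpha\varphi(y/\alpha)$ and \eqref{2.7B}; note that $Y_N\ge0$ a.s.\ makes the region $y\le0$ trivially consistent with the upper bound.

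I expect the main obstacle to be bookkeeping rather than anything conceptual: the two speed/scale manipulations — replacing the speed $\ell_N$ by $N$, and replacing $Y_N$ by $\alpha Z_{\ell_N}$ — are each routine but must be carried out with attention to signs and to unbounded closed sets. A reader who prefers to avoid invoking the exponential-equivalence and contraction machinery can instead verify conditions (1)--(3) for $Y_N$ directly: the lower bound holds because for $y>0$ and small $\delta$ the moving interval $(\tfrac{N}{\ell_N}(y-\delta),\tfrac{N}{\ell_N}(y+\delta))$ eventually contains a fixed neighborhood of $y/\alpha$, and the upper bound follows by splitting a closed set $K$ into $K\cap[1/m,m]$ (controlled via the speed-$N$ bound for $Z_{\ell_N}$ plus Hausdorff convergence of $\tfrac{N}{\ell_N}(K\cap[1/m,m])$ to $\alpha^{-1}(K\cap[1/m,m])$) and its complement (controlled via the exponential tightness $\mathbb{P}(Z_M\notin[1/m,m])\le e^{-M\min(\varphi(1/m),\varphi(m))}$, then letting $m\to\infty$). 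One can also give a one-shot argument from the explicit $\mathrm{Gamma}(\ell_N,N)$ density of $Y_N$ via Stirling's formula, but that merely re-derives Cram\'er's estimate in this special case.
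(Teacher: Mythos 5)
Your argument is correct and follows essentially the same route the paper sketches in its remark after the theorem: Cram\'er's Theorem for the empirical means $Z_M$, a change of speed from $\ell_N$ to $N$ (producing the factor $\alpha$ in the rate), a deterministic rescaling by $\alpha$ (producing the argument $y/\alpha$), and a final step absorbing the discrepancy between $\ell_N/N$ and $\alpha$, which you handle cleanly via exponential equivalence using the tail bound from the rate function. The details you supply (the subsequence $\liminf$/$\limsup$ observation, the sign bookkeeping in the speed change, and the exponential-equivalence estimate) are all sound.
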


\begin{remark} This goes beyond the direct use of Cram\'{e}r in two ways.  First, we note that if real valued $Z_N$ have a LDP with speed $N$ and rate $I$, then $\alpha Z_N$ has a LDP with speed $\alpha N$ and rate $\alpha I(\cdot/\alpha)$ by a trivial calculation.  Secondly, if $\alpha_N = \ell_N/N$ and $\alpha_N \to 1$, then $\alpha_N^{-1}Y_N$ has a LDP with speed $N$ and rate $I$ if $Y_N$ does and the rate function is continuous.
\end{remark}

Next, we discuss a result known as the contraction principle which allows one to pull a LDP over under continuous maps.  For our basic situation, the maps are homeomorphisms so it is trivial that LDP's carry over but in two places below we'll need the following:

\begin{theorem} [Contraction Principle] \lb{T2.3A} Let $X$ and $Y$ be Polish spaces and $f: X \to Y$ a continuous function onto $Y$.
Suppose $\{\bbP_N\}_{N=1}^\infty$ is a family of probability measures on $X$ that obeys a LDP with speed $a_N$ and good rate function $I$. Define on $Y$ the function
\begin{equation}\label{2.7AA}
  I^{(f)}(y) = \inf \{I(x) \, | \, f(x)=y\}
\end{equation}
 Then the family of measures on $Y$ defined by
\begin{equation}\label{2.7Bcont}
  \bbP^{(f)}_N(A) = \bbP_N(f^{-1}[A])
\end{equation}
obeys a LDP with speed $a_N$ and good rate function $I^{(f)}$.
\end{theorem}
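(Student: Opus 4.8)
The plan is to verify the three defining properties of a LDP for $\bbP^{(f)}_N$ directly from the corresponding properties of $\bbP_N$, exploiting continuity of $f$ to pass between open/closed sets in $Y$ and their preimages in $X$. First I would check that $I^{(f)}$ is a good rate function: for $\alpha \geq 0$ one has $\{y \mid I^{(f)}(y) \leq \alpha\} = f(\{x \mid I(x) \leq \alpha\})$, using that $f$ is onto so the infimum defining $I^{(f)}(y)$ is over a nonempty set, and that $I$ is good so $\{I \leq \alpha\}$ is compact; the continuous image of a compact set is compact, hence $\{I^{(f)} \leq \alpha\}$ is compact. Lower semicontinuity of $I^{(f)}$ then follows because its sublevel sets are compact, hence closed. (One small point: because the sublevel sets of $I$ are compact, the infimum in \eqref{2.7AA} is actually attained whenever it is finite, which is what makes the identity $\{I^{(f)} \leq \alpha\} = f(\{I \leq \alpha\})$ hold with no closure on the right.)

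Next I would prove the large deviations lower bound. Let $U \subset Y$ be open; then $f^{-1}[U]$ is open in $X$, and by \eqref{2.7Bcont} together with the lower bound \eqref{2.1} for $\bbP_N$,
\[
  \liminf_{N \to \infty} \frac{1}{a_N} \log \bbP^{(f)}_N(U)
  = \liminf_{N \to \infty} \frac{1}{a_N} \log \bbP_N(f^{-1}[U])
  \geq -\inf_{x \in f^{-1}[U]} I(x).
\]
It remains to observe that $\inf_{x \in f^{-1}[U]} I(x) = \inf_{y \in U} I^{(f)}(y)$: every $x \in f^{-1}[U]$ has $f(x) \in U$ and contributes $I(x) \geq I^{(f)}(f(x)) \geq \inf_U I^{(f)}$, while conversely for each $y \in U$ and each $x$ with $f(x) = y$ we get $I(x) \geq \inf_{f^{-1}[U]} I$, so taking the infimum over such $x$ gives $I^{(f)}(y) \geq \inf_{f^{-1}[U]} I$.

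The closed-set upper bound is the one place where goodness of $I$ genuinely enters and is the main obstacle, since $f$ need not be a closed map, so $f^{-1}[K]$ is closed for $K$ closed but one must be careful that the infimum $\inf_{f^{-1}[K]} I$ equals $\inf_{y \in K} I^{(f)}(y)$ rather than merely dominating it. The inequality $\inf_{f^{-1}[K]} I \geq \inf_{K} I^{(f)}$ is immediate as above; for the reverse, fix $y \in K$ with $I^{(f)}(y) < \infty$ — because $I$ has compact sublevel sets, the infimum defining $I^{(f)}(y)$ is attained at some $x$ with $f(x) = y \in K$, so $x \in f^{-1}[K]$ and $I(x) = I^{(f)}(y) \geq \inf_{f^{-1}[K]} I$; taking the infimum over $y \in K$ finishes it (the case where $I^{(f)} \equiv \infty$ on $K$ is trivial). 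Then, with $K \subset Y$ closed,
\[
  \limsup_{N \to \infty} \frac{1}{a_N} \log \bbP^{(f)}_N(K)
  = \limsup_{N \to \infty} \frac{1}{a_N} \log \bbP_N(f^{-1}[K])
  \leq -\inf_{x \in f^{-1}[K]} I(x)
  = -\inf_{y \in K} I^{(f)}(y),
\]
using \eqref{2.1closed}. Combining the three parts establishes that $\bbP^{(f)}_N$ obeys a LDP with speed $a_N$ and good rate function $I^{(f)}$. I expect the routine part to be the set-theoretic bookkeeping with infima over preimages, and the only conceptually delicate step to be the use of compactness of sublevel sets to guarantee the infimum in \eqref{2.7AA} is attained, which is exactly what prevents a spurious closure from appearing in the upper bound.
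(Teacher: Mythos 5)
Your proof is correct and follows the same route as the paper's, which simply notes that preimages of open (resp.\ closed) sets are open (resp.\ closed), that $\inf_{y\in A} I^{(f)}(y)=\inf_{x\in f^{-1}[A]} I(x)$, and refers the goodness of $I^{(f)}$ to \cite{DZ}; your fleshed-out goodness argument via compactness of sublevel sets is exactly the intended one. One small remark: the identity $\inf_{y\in K} I^{(f)}(y)=\inf_{x\in f^{-1}[K]} I(x)$ is a trivial iterated infimum valid for \emph{any} set $K$ (your own open-set argument already proves it without attainment), so the compactness-based attainment of the infimum in \eqref{2.7AA} is not actually needed for the closed-set upper bound --- goodness genuinely enters only in showing that $I^{(f)}$ is itself a good (in particular lower semicontinuous) rate function.
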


\begin{proof} A simple argument shows that $I^{(f)}$ is a good rate function (see \cite[Section 4.2.1]{DZ}).  If $A$ is open (resp.\ closed), so is $f^{-1}[A]$ and
\begin{equation}\label{2.7CC}
  \inf_{y \in A} I^{(f)}(y) = \inf_{x \in f^{-1}[A]} I(x)
\end{equation}
so the LDP bounds for $\bbP_N$ carry over to such bounds for $\bbP^{(f)}_N$.
\end{proof}

The last topic that we want to consider in this section is the theory of projective limits of LDP's or at least a very special case -- projective limits are indexed by directed sets; we'll only consider the case where the directed set is  $\bbZ_+$.  We have Polish spaces $\{X_j\}_{j=1}^\infty$ and $X$ and
continuous maps $\pi_j:X \to X_j$ and $\pi_{j+1,j}:X_{j+1} \to X_j$ all onto so that $\pi_{j+1,j}\pi_{j+1}=\pi_j$. We require that if $\pi_j(x) = \pi_j(y)$ for $x,y \in X$ and all $j$, then $x=y$. (In the abstract discussion, one just needs to be given $X_j$ and $\pi_{j+1,j}$ and can form $X$ as the subset of $\prod_{j=1}^\infty X_j$ of those $x=(x_j)$ with $\pi_{j+1,j}(x_{j+1})=x_j$.  One puts the product topology on $X$.  In the cases of interest, $X$ will be explicitly given but it agrees with this abstract construction.)  For a measure $\bbP$ on $X$, define $\pi_j^*(\bbP)$, a measure on $X_j$ by $\pi_j^*(\bbP)(A) = \bbP(\pi_j^{-1}[A])$.  Here is a basic theorem due to Dawson--G\"{a}rtner \cite{DG} (see \cite{DZ} for a proof):

\begin{theorem} [Projective Limit Theorem] \lb{T2.4} Let $\{\bbP_N\}_{N=1}^\infty$ be a family of measures on $X$.  Suppose that for each $j$, $\{\pi^*_j(\bbP_N)\}_{N=1}^\infty$ obeys a LDP with speed $a_N$ and good rate function, $I_j$ on $X_j$.  Let
\begin{equation}\label{2.8}
  I(x) = \sup_j \{I_j(\pi_j(x))\}
\end{equation}
Then I is a good rate function and $\{\bbP_N\}_{N=1}^\infty$ obeys a LDP with speed $a_N$ and rate function $I$.
\end{theorem}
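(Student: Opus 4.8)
The plan is to establish the three defining properties of an LDP for $\{\bbP_N\}$ on $X$ with rate function $I(x)=\sup_j I_j(\pi_j(x))$, leaning on the hypothesis that each pushforward family $\{\pi_j^*(\bbP_N)\}$ satisfies an LDP with good rate $I_j$ on $X_j$. First I would note that each $x\mapsto I_j(\pi_j(x))$ is lower semicontinuous (a continuous map composed with an l.s.c.\ function), and a supremum of l.s.c.\ functions is l.s.c., so $I$ is l.s.c.; nonnegativity is immediate. For goodness, I would show that the sublevel set $\{I\le\alpha\}$ is closed (by l.s.c.) and contained in the set of $x$ with $\pi_j(x)\in\{I_j\le\alpha\}$ for every $j$; identifying $X$ with the projective limit sitting inside $\prod_j X_j$, this realizes $\{I\le\alpha\}$ as a closed subset of the compact set $\prod_j\{I_j\le\alpha\}$ (compact by Tychonoff, since each factor is compact by goodness of $I_j$), hence compact.

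Next I would prove the large deviations upper bound \eqref{2.1closed}. For a closed $K\subset X$, fix $j$; then $\pi_j[K]$ need not be closed, but $\pi_j^{-1}[\overbar{\pi_j[K]}]\supset K$, and since $\pi_j^*(\bbP_N)$ obeys an LDP,
\[
\limsup_N \frac{1}{a_N}\log\bbP_N(K)\le \limsup_N\frac{1}{a_N}\log \pi_j^*(\bbP_N)\bigl(\overbar{\pi_j[K]}\bigr)\le -\inf_{y\in\overbar{\pi_j[K]}} I_j(y).
\]
Taking the supremum over $j$ on the right, I would need to show $\sup_j\inf_{y\in\overbar{\pi_j[K]}} I_j(y)\ge \inf_{x\in K} I(x)$. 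This is the delicate point: for each $j$ pick $y_j$ nearly attaining the inner infimum (using goodness to get an actual minimizer in $\{I_j\le\alpha\}$ if the left side is $\le\alpha$); the compatibility $\pi_{j+1,j}(y_{j+1})=y_j$ can be arranged along a subnet using compactness of the sublevel sets, producing a point $x\in X$ with $\pi_j(x)=y_j$, and then $I(x)=\sup_j I_j(\pi_j(x))\le \sup_j\inf_{\overbar{\pi_j[K]}}I_j$, while a closedness/limit argument places $x\in K$. (If $\sup_j\inf_{\overbar{\pi_j[K]}}I_j=\infty$ the bound is trivial.)

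For the lower bound \eqref{2.1}, let $U\subset X$ be open and fix $x_0\in U$. Basic open sets in the projective-limit topology have the form $\pi_j^{-1}[V]$ for $V\subset X_j$ open, so choose $j$ and open $V\ni \pi_j(x_0)$ with $\pi_j^{-1}[V]\subset U$. Then
\[
\liminf_N\frac{1}{a_N}\log\bbP_N(U)\ge \liminf_N\frac{1}{a_N}\log\pi_j^*(\bbP_N)(V)\ge -\inf_{y\in V} I_j(y)\ge -I_j(\pi_j(x_0)),
\]
and since $I_j(\pi_j(x_0))\le I(x_0)$, we get $\liminf_N a_N^{-1}\log\bbP_N(U)\ge -I(x_0)$; taking the supremum over $x_0\in U$ gives \eqref{2.1}. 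I expect the main obstacle to be the upper bound, specifically the exchange of $\sup_j$ and $\inf$ in the displayed inequality chain: making this rigorous requires the goodness of the $I_j$ together with a compactness/diagonal extraction in $\prod_j X_j$ to produce a compatible limit point lying in $K$, and this is exactly where the hypotheses (goodness, $K$ closed, the injectivity of $(\pi_j)$ separating points) are used. The remaining verifications — l.s.c., nonnegativity, the open-set reduction — are routine.
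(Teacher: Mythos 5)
The paper itself gives no proof of Theorem \ref{T2.4}, deferring to Dembo--Zeitouni \cite{DZ}, and your proposal is a correct sketch of exactly that standard Dawson--G\"artner argument: lower bound via basic neighborhoods of the form $\pi_j^{-1}[V]$, upper bound via $K\subset\pi_j^{-1}[\overline{\pi_j[K]}]$, and goodness via Tychonoff. The one step you rightly flag as delicate, the inequality $\sup_j\inf_{\overline{\pi_j[K]}}I_j\ge\inf_{K}I$, is the crux, and your ``pick near-minimizers $y_j$ and pass to a subnet'' is slightly loose as stated (the minimizers themselves need not be compatible); the clean version is the one in \cite{DZ}: for any $\alpha$ exceeding the left-hand side, the sets $Y_j=\overline{\pi_j[K]}\cap\{I_j\le\alpha\}$ are nonempty and compact and satisfy $\pi_{j+1,j}(Y_{j+1})\subset Y_j$ (by continuity of $\pi_{j+1,j}$ together with \eqref{2.12}), so the finite-intersection-property argument produces a compatible thread $x$ with $I(x)\le\alpha$ and $\pi_j(x)\in\overline{\pi_j[K]}$ for all $j$, and $x\in K$ because $K$ is closed and the $\pi_j^{-1}[V]$ form a base of the topology. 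With that substitution your outline is complete and matches the cited proof.
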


\begin{remarks} 1. The converse, i.e.\ if $\{\bbP_N\}_{N=1}^\infty$ obeys a LDP then so does each $\{\pi^*_j(\bbP_N)\}_{N=1}^\infty$, is trivial by the contraction principle.

2.  The same idea shows that if $\{\pi^*_{j+1}(\bbP_N)\}_{N=1}^\infty$ obeys a LDP, so does $\{\pi^*_j(\bbP_N)\}_{N=1}^\infty$ and
\begin{equation} \lb{2.12}
  I_j(x) = \inf \{I_{j+1}(y) \, | \, \pi_j(y) = x\}
\end{equation}
which shows that $I_j(\pi_j(x))$ is monotone in $j$ so the $\sup$ in \eqref{2.8} is a limit.
\end{remarks}

\begin{example} [$\bbR^\infty$] \lb{E2.8} Take $X_j=\bbR^j$, $X=\bbR^\infty=\{(x_1,x_2,\dots)\,|\, x_j \in \bbR\}$ which is a Polish space and $\pi_j(x)_k = x_k$ for $k=1,\dots,j$.  Theorem \ref{T2.4} says that to prove a LDP for $X$, we need only prove it for the finite dimensional $\bbR^j$.
\end{example}

\begin{example} [$\calM_{+,1}(\partial\bbD)$]  \lb{E2.9} Let $\bbP$ be a measure on $\calM_{+,1}(\partial\bbD)$, the probability measures on the unit circle.  Given $\mu \in \calM_{+,1}(\partial\bbD)$ and $j = 1,2,\dots$, let $\pi_j(\mu)$ be the point in $\bbR^{2^j}$ with coordinates $\mu(I_k^{(j)}), k=1,\dots,2^j$ where
\begin{equation}\label{2.13}
  I_k^{(j)} = \{e^{2\pi i \theta} \, | \, \tfrac{k-1}{2^j} \le \theta < \tfrac{k}{2^j} \}
\end{equation}
Realizing $\bbR^{2^j}$ as a set of measures, we can think of
\begin{equation}\label{2.13A}
  \pi_j(\mu) = \sum_{k=1}^{2^j} \mu(I_k^{(j)}) 2^j \chi_{I_k^{(j)}}(x) \, dx
\end{equation}
Thus $\bbP$ induces a measure $\pi_j^*(\bbP)$ on either $\bbR^{2^j}$ or on $\calM_{+,1}(\partial\bbD)$ supported on a $2^j$--dimensional subspace.  The $\pi_j(\mu)$ determine $\mu(\{e^{2\pi i \theta} \, | \, 0 \le \theta < \tfrac{k}{2^j} \})$ and so $\mu$.  Clearly as $j \to \infty$, $\pi_j(\mu)$ converges weakly to $\mu$.

In this case
\begin{equation}\label{2.14}
  \pi_{j+1,j}(y)_\ell = y_{2\ell -1}+y_\ell \quad \ell = 1,\dots,2^j
\end{equation}
Thus, to get a LDP for $\calM_{+,1}(\partial\bbD)$, we need only prove $2^j$--dimensional LDPs.
\end{example}

%%%%%%%%%%%%%%%%%%%%%%%%%%%%%%%%%%%%%%%%%%%%%%%%%%%%%%%%%%%%%%
\section{Szeg\H{o}'s Theorem: Measure Side}  \lb{s3}
%%%%%%%%%%%%%%%%%%%%%%%%%%%%%%%%%%%%%%%%%%%%%%%%%%%%%%%%%%%%%%

We begin our presentation of the proof of Szeg\H{o}'s theorem using large deviations for CUE.  CUE($n$) is just another name for Haar measure on the unitary $n \times n$ matrices.  Let $\{e_j\}_{j=1}^n$ be the standard basis for $\bbC^n$.  It is easy to see that for a.e.\ $U$, $e_1$ is a cyclic vector for $U$ so that $U$ and $e_1$ define a spectral measure
\begin{equation}\label{3.1}
  d\mu(\theta) = \sum_{j=1}^{n} w_j \delta_{\lambda_j}
  \end{equation}
on $\partial\bbD$, with precisely $n$ pure points (aka atoms) $\lambda_j = e^{i\theta_j}, j=1,\dots,n$. Letting $\{\varphi_j\}_{j=1}^n$ be the orthonormal basis of eigenvectors of $U$, so that $U\varphi_j = \lambda_j \varphi_j$, we have $w_j = |\jap{\varphi_j,e_1}|^2$.  Of course, since $\norm{e_1} = 1$,
\begin{equation}\label{3.2}
  \sum_{j=1}^{n} w_j = 1
\end{equation}

Thus Haar measure induces a measure $\bbP_n$ on $n$--point probability measures (which we can think of as a measure on all measures that happens to be supported on the n--point measures) which we also call CUE($n$).

For $\tilde U$ an arbitrary unitary, $\tilde UU\tilde U^{-1}$
has the same eigenvalues as $U$ and $\jap{\varphi_j(\tilde U
  U\tilde U^{-1}),e_1} = \jap{\tilde U\varphi_j(U),e_1}$.
  Since $U \mapsto \tilde UU\tilde U^{-1}$ leaves Haar measure invariant,
  we see that the distribution of the unit vector
  $\left( \jap{\varphi_1(U),e_1}, \jap{\varphi_2(U),e_1}, \ldots, \jap{\varphi_n(U),e_1} \right) \in \bbC^n$ is invariant under unitary transformations, which implies it is the Euclidean measure restricted to the sphere. By using the fact that that $d^2z = \tfrac{1}{2} d\theta d(|z|^2)$ (which shows it is essential we work in $\bbC$), it is not hard to show that the squares of the components of a complex $n$--vector uniformly distributed on the sphere are uniformly distributed on the simplex  \eqref{3.2}. Thus we get that the $\{w_j\}_{j=1}^n$ are independent of the eigenvalues and have $\bbP_n$-distribution
\begin{equation}\label{3.3}
  (n-1)! \chi_{\{\sum_{j=1}^{n-1}w_j\ \le 1 \, ; \, w_j \ge 0\}} (w) dw_1 \dots dw_{n-1}
\end{equation}

The distribution of the eigenvalues is given by the Weyl integration formula which says that the distribution of the eigenvalues under Haar measure is
\begin{equation} \lb{3.4}
 \frac{1}{n!} |\Delta(e^{i\theta_1},\dots,e^{i\theta_n})|^2 \prod_{j=1}^{n} \frac{d\theta_j}{2\pi}
\end{equation}
\begin{equation} \lb{3.5}
  \Delta(\lambda_1,\dots,\lambda_n) \equiv \prod_{i<j} (\lambda_i - \lambda_j)
\end{equation}
For proofs of this formula from two different points of view, see Anderson et al \cite[Section 4.17]{AGZ} or Simon \cite[Section IX.3]{SiGpRep}.  Thus
\begin{align}
     d\bbP_n(\theta_1,\dots,\theta_n,w_1,\dots,w_n) = \frac{1}{n (2\pi)^n}  \chi_{\{\sum_{j=1}^{n-1}w_j\ \le 1 \, ; \, w_j \ge 0\}} (w) \nonumber \\
     \quad |\Delta(e^{i\theta_1},\dots,e^{i\theta_n}|^2 d\theta_1 \dots d\theta_n \, dw_1 \dots dw_{n-1} \lb{3.6}
\end{align}

With this in hand, we turn to the proof of the measure theory LD result:

\begin{theorem} \lb{T3.1} $\bbP_n$ as a family of measures on the set of probability measure on $\partial\bbD$ obeys a LDP with speed $n$ and good rate function
\begin{equation}\label{3.7}
  I(\mu) = - \int \log w(\theta) \, \frac{d\theta}{2\pi}
\end{equation}
where $w$ is given by \eqref{1.1}
\end{theorem}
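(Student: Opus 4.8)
The plan is to split the measure $d\mu$ into its weight part and its singular part, and to show that the large deviation principle is governed entirely by the weights $\{w_j\}$, while the eigenvalue locations contribute nothing to the rate function. The key structural fact, already extracted in \eqref{3.6}, is that under $\bbP_n$ the weights $\{w_j\}_{j=1}^n$ and the eigenangles $\{\theta_j\}_{j=1}^n$ are \emph{independent}, with the weights uniform on the simplex \eqref{3.3} and the angles distributed by the Weyl formula \eqref{3.4}. First I would observe that any bounded continuous test functional of $\mu$ factors through this product structure, and that as $n\to\infty$ the empirical eigenvalue distribution $\frac1n\sum_j\delta_{\theta_j}$ concentrates (exponentially, at speed $n^2$, hence at speed $n$ the angles are ``free'') at normalized Lebesgue measure on $\partial\bbD$; this is the standard Coulomb-gas concentration coming from $|\Delta|^2 = \exp\bigl(\sum_{i\ne j}\log|e^{i\theta_i}-e^{i\theta_j}|\bigr)$. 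Consequently the atoms of $\mu$ become equidistributed, and the only obstruction to $\mu$ being ``close'' to a target measure $\nu$ with weight $w$ is the behavior of the weight sequence.

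The heart of the argument is therefore to identify the rate function of the weights. Here I would appeal to Example \ref{E2.3A} and Theorem \ref{T2.3B}: if $\{X_j\}$ are i.i.d.\ exponential random variables, then $(X_1,\dots,X_n)\big/\sum_{j=1}^n X_j$ is exactly uniform on the simplex, so the CUE weight vector has the same law as a normalized vector of $n$ i.i.d.\ exponentials. Thus for any fixed arc $I$, the quantity $\mu(I)=\sum_{j:\theta_j\in I} w_j$ behaves, after conditioning on $\approx \frac{|I|}{2\pi}n$ of the angles landing in $I$, like $\bigl(\sum_{j=1}^{\ell_n}X_j\bigr)\big/\bigl(\sum_{j=1}^{n}X_j\bigr)$ with $\ell_n/n\to |I|/2\pi$, whose LDP at speed $n$ is given by Theorem \ref{T2.3B} (combined with the exponential concentration of the denominator at its mean $n$). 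This suggests that the finite-dimensional projections $\pi_j$ of Example \ref{E2.9} — the vector of arc-masses $\mu(I_k^{(j)})$ — obey a LDP at speed $n$, and that one can assemble these via the Projective Limit Theorem \ref{T2.4}. Carrying out the bookkeeping, the rate function on the $2^j$-dimensional simplex of arc-masses $(m_1,\dots,m_{2^j})$ works out (after a Stirling/Laplace computation on \eqref{3.6}) to $\sum_k \bigl(\tfrac{1}{2^j}\bigr)\varphi\bigl(2^j m_k\bigr) = \sum_k \tfrac{1}{2^j}\bigl[2^j m_k - 1 - \log(2^j m_k)\bigr] = -\sum_k \tfrac{1}{2^j}\log\bigl(2^j m_k\bigr)$, which is precisely minus a Riemann sum for $\int \log\bigl(\tfrac{dm}{d(\theta/2\pi)}\bigr)\,\tfrac{d\theta}{2\pi}$ evaluated at the dyadic conditional expectations of $\mu$. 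Taking the supremum over $j$ as in \eqref{2.8}, and using the martingale convergence / lower-semicontinuity of the entropy functional $\mu\mapsto -\int\log w\,\tfrac{d\theta}{2\pi}$ (with the convention that it is $+\infty$ when $d\mu_s\ne 0$ or $\log w\notin L^1$), yields exactly \eqref{3.7}.

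To make this rigorous I would: (i) write $d\bbP_n$ in the product form \eqref{3.6} and reduce, via Theorem \ref{T2.4} and Example \ref{E2.9}, to proving a LDP for the random vector $\bigl(\mu(I_1^{(j)}),\dots,\mu(I_{2^j}^{(j)})\bigr)\in\bbR^{2^j}$ for each fixed $j$; (ii) condition on the eigenvalue configuration, use the Weyl-formula concentration to show that with overwhelming probability each arc $I_k^{(j)}$ contains $(1+o(1))\,2^{-j}n$ eigenvalues (and that atypical counts are suppressed at speed $n^2 \gg n$, so they do not affect the speed-$n$ rate), and then realize the conditional law of the arc-masses as normalized partial sums of i.i.d.\ exponentials as above; (iii) apply Theorem \ref{T2.3B} together with the exponential concentration of $\sum_{j=1}^n X_j$ around $n$ to get the conditional LDP, and check the arcs can be handled jointly because the blocks of exponentials are independent; (iv) compute the resulting rate function and verify it equals the dyadic Riemann sum above; (v) take $\sup_j$ and identify the limit with \eqref{3.7}, checking goodness of the rate function (its sublevel sets are weakly compact since $\calM_{+,1}(\partial\bbD)$ is compact). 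The main obstacle I anticipate is step (ii)–(iii): controlling the interaction between the random arc-counts and the weight sums uniformly enough to get a clean conditional LDP — in particular ensuring that the fluctuations of the eigenvalue counts around $2^{-j}n$, which live on scale $\sqrt n$ or larger, neither destroy the speed-$n$ rate nor shift it, and that one may legitimately replace $\ell_n$ by its deterministic limit inside Theorem \ref{T2.3B}. A secondary technical point is the passage from the finite-dimensional dyadic rate functions to \eqref{3.7}, i.e.\ showing that $\sup_j$ of the Riemann sums genuinely reconstructs $-\int\log w\,\tfrac{d\theta}{2\pi}$ and correctly assigns $+\infty$ to measures with a nontrivial singular part; this is where the semicontinuity argument going back to Verblunsky enters.
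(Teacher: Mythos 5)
Your proposal is correct and follows essentially the same route as the paper: independence of the weights and eigenangles, speed-$n^2$ suppression of atypical eigenvalue counts, realization of the simplex-uniform weights as normalized i.i.d.\ exponentials, the dyadic projections of Example \ref{E2.9} fed into the Projective Limit Theorem, and the Jensen/semicontinuity argument (Proposition \ref{P3.3} in the paper) identifying $\sup_j$ of the dyadic entropies with \eqref{3.7}. The one spot to tighten is the normalization by $\sum_{j=1}^n X_j$: that sum has its own speed-$n$ LDP, so ``exponential concentration of the denominator'' is not by itself a valid reduction; the paper instead applies the contraction principle to the full unnormalized vector of arc masses and uses the fact that for exponentials the total is independent of the normalized vector, so the rate \eqref{3.15} splits and the infimum over the total-mass coordinate contributes zero.
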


\begin{remarks} 1. If $\mu,\nu$ are two probability measures, then
\begin{equation} \lb{3.8}
S(\nu \, | \,\mu) = \begin{cases}
  -\int \log \left(\frac{d\nu}{d\mu}\right) \, d\nu , & \mbox{if } \nu \mbox{ is } \mu\mbox{--a.c.} \\
  -\infty, & \mbox{otherwise}.
\end{cases}
\end{equation}
is called the relative entropy.  Its negative is called the Kullback--Leibler (KL) divergence by statisticians.  Both sign conventions are used for ``the relative entropy''.  We follow much of the spectral theory literature which follows Killip--Simon \cite{KS}.  Most of the probability literature uses the opposite sign! \eqref{3.7} is $-S(\tfrac{d\theta}{2\pi}\,|\,\mu)$.  That $I$ is lower semicontinuous goes back to Verblunsky \cite{Verb}.  That the reverse Kullback--Leibler divergence is relevant to large deviations
appears in Lynch--Sethuraman \cite{LS} and Ganesh--O'Connell \cite{GO}.

2. It is remarkable (though not unusual) that for each $n$, $\bbP_n$ is supported on a set where the rate function (i.e. the Kullback-Leibler) divergence is infinite. This is since the approximating measures are discrete while the limiting one is absolutely continuous.
\end{remarks}

As a preliminary, one needs to look at what spectral theorists call the density of states, OP workers the density of zeroes and probabilists the empirical measure, namely
\begin{equation}\label{3.9}
  \mu^{(E)} = \frac{1}{n} \sum_{j=1}^{n} \delta_{\lambda_j}
\end{equation}
where $\lambda_j$ are the atoms of $\mu$ in \eqref{3.1}. $\bbP_n$ induces a distribution $\bbP_n^{(E)}$ on point measures of the form \eqref{3.9}, essentially given by \eqref{3.4}/\eqref{3.5}.  One has the following result of Ben Arous and Guionnet \cite{BAG} (see also \cite[Section 2.6]{AGZ}; these results discuss GUE, not CUE -- the analog for CUE uses the same ideas and is even simpler):

\begin{theorem} \lb{T3.2} $\bbP_n^{(E)}$ obeys a LDP with speed $n^2$ and good rate function
\begin{equation}\label{3.10}
  I(\mu) = - \int \log(|z-w|) \, d\mu(z) \, d\mu(w)
\end{equation}
\end{theorem}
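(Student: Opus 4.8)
The plan is to follow the Ben Arous--Guionnet strategy for GUE \cite{BAG}, which simplifies here because the ambient space $\calM_{+,1}(\partial\bbD)$ of probability measures on $\partial\bbD$ is already weakly compact. First I would make the density explicit: by the Weyl formula \eqref{3.4}--\eqref{3.5} and the identity $|\Delta(\lambda_1,\dots,\lambda_n)|^2 = \exp\bigl(\sum_{i\ne j}\log\abs{\lambda_i-\lambda_j}\bigr)$, the law $\bbP_n^{(E)}$ of $\mu^{(E)} = \tfrac1n\sum_j \delta_{\lambda_j}$ is the pushforward, under $(\theta_1,\dots,\theta_n)\mapsto \mu^{(E)}$, of a measure on $(\partial\bbD)^n$ with density proportional to $\exp\bigl(n^2\calE(\mu^{(E)})\bigr)$, where $\calE(\mu) = \iint \log\abs{z-w}\,d\mu(z)\,d\mu(w)$; the candidate rate function is $I(\mu) = -\calE(\mu)$, and the symmetry prefactor $1/n!$ is $e^{o(n^2)}$, hence irrelevant at speed $n^2$. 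The only delicate feature is the diagonal singularity of $\log\abs{z-w}$, which I would handle with the truncations $\log_M t := \max(\log t,-M)$ and $\calE_M(\mu) := \iint \log_M\abs{z-w}\,d\mu(z)\,d\mu(w)$: these are bounded, weakly continuous in $\mu$, and decrease pointwise to $\calE(\mu)$ as $M\to\infty$.

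For the large-deviations upper bound on a closed (hence compact) set $K \subset \calM_{+,1}(\partial\bbD)$, I would use $\sum_{i\ne j}\log\abs{\lambda_i-\lambda_j} \le \sum_{i\ne j}\log_M\abs{\lambda_i-\lambda_j} = n^2\calE_M(\mu^{(E)}) + nM$ together with weak continuity of $\calE_M$ to obtain $\limsup_n n^{-2}\log\bbP_n^{(E)}(K) \le \sup_{\mu\in K}\calE_M(\mu)$ for every $M$; then $M\to\infty$, interchanging $\sup_K$ with the decreasing limit $\lim_M$ on the compact $K$, gives $\le \sup_{\mu\in K}\calE(\mu) = -\inf_K I$. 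For the lower bound on an open $G$ and a $\mu\in G$ with $I(\mu)<\infty$, I would first reduce — by weak approximation of $\mu$ together with continuity of $\calE$ along the approximating sequence — to the case that $\mu$ has a density bounded above and below. Then I partition $\partial\bbD$ into consecutive arcs $A_1,\dots,A_n$ each of $\mu$-mass $1/n$ (so of length $\asymp 1/n$) and restrict the integral defining $\bbP_n^{(E)}(G)$ to the event that $\lambda_j$ lies in the middle third of $A_j$ for all $j$: there $\mu^{(E)}$ is weakly $o(1)$-close to $\mu$, hence in $G$ for large $n$, and $\sum_{i\ne j}\log\abs{\lambda_i-\lambda_j} \ge n^2\calE(\mu) - o(n^2)$, since the pairs from non-adjacent arcs form a Riemann sum for $\calE(\mu)$ while the $O(n)$ adjacent pairs contribute only $O(n\log n)$. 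The volume of this event, times $1/n!$, is again $e^{O(n\log n)} = e^{o(n^2)}$, so $\liminf_n n^{-2}\log\bbP_n^{(E)}(G) \ge \calE(\mu) = -I(\mu)$.

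It remains to check goodness and the normalization. Goodness is automatic: $\calE = \inf_M \calE_M$ is upper semicontinuous, so $I$ is lower semicontinuous, and every level set $\{I \le \alpha\}$ is a closed subset of the compact space $\calM_{+,1}(\partial\bbD)$; for the same reason no separate exponential-tightness argument is needed, unlike for GUE. Finally, since the $\bbP_n^{(E)}$ are probability measures, the LDP forces $\inf_\mu I(\mu)=0$, so one should (and can) verify that $\calE(\mu)\le 0$ with equality exactly at $\mu = d\theta/2\pi$; this is classical potential theory — the unit circle has logarithmic capacity $1$ — or, concretely, the identity $\calE(\mu) = -\sum_{k\ge1} k^{-1}\abs{\hat\mu(k)}^2$, which also confirms that no additive constant is needed in \eqref{3.10}. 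The step I expect to be genuinely technical is the control of the logarithmic singularity: the cutoff-plus-monotone-convergence interchange in the upper bound and, more substantially, the estimate on the near-diagonal (adjacent-arc) contributions in the lower bound; everything else is bookkeeping.
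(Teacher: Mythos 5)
The paper gives no proof of Theorem \ref{T3.2}: it records only the Riemann--sum intuition of \eqref{3.11}--\eqref{3.12} and defers to Ben Arous--Guionnet \cite{BAG} and \cite[Section 2.6]{AGZ}, and your proposal is a correct fleshing-out of exactly that route (truncated energies $\calE_M$ for the upper bound, arc discretization for the lower bound, compactness of $\calM_{+,1}(\partial\bbD)$ in place of exponential tightness). One phrase to tighten: the density is not proportional to $\exp\bigl(n^2\calE(\mu^{(E)})\bigr)$ --- for an atomic measure $\calE(\mu^{(E)})=-\infty$ --- but to the exponential of the off-diagonal sum $\sum_{i\ne j}\log\abs{\lambda_i-\lambda_j}$, which is what your $\calE_M$ bookkeeping correctly uses.
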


\begin{remark} In \eqref{3.10}, $z$ and $w$ lie in the unit circle and $|z-w|$ is a two dimensional distance.  This is a $2D$ Coulomb energy.  There is a close connection between this result and Johansson's proof \cite{Joh} of the Strong Szeg\H{o} Theorem.
\end{remark}

We will not give a formal proof of Theorem \ref{T3.2} but instead indicate the basic intuition. For distinct $\lambda_i$s,
\begin{align}
  \prod_{i<j} |e^{i\theta_i} - e^{i\theta_j}|^2 &= \exp\left(-n^2 J_n(\lambda_1,\dots,\lambda_n)\right) \lb{3.11} \\
  J_n(\lambda_1,\dots,\lambda_n)                &= -\frac{2}{n^2} \sum_{i<j} \log(|\lambda_i - \lambda_j|) \nonumber \\
                                                &= -\frac{1}{n^2} \sum_{i \ne j} \log(|\lambda_i - \lambda_j|) \lb{3.12}
\end{align}
If $\mu^{(E)}$ is an $n$--point measure near $\mu$ and the $\lambda$ have reasonable local spacing, the sum in \eqref{3.12} should be near the integral in \eqref{3.10}.  This completes our description of the intuition behind the proof of Theorem \ref{T3.2}.

The weights and eigenvalues are independent. We'll consider a fixed triangular array of eigenvalues $\{\lambda_\ell^{(n)}\}_{1 \le \ell \le n; \, n=1,\dots}$ where we suppose that
\begin{equation}\label{3.13}
  \frac{1}{n} \sum_{\ell=1}^{n} \delta_{\lambda_\ell^{(n)}} \to \frac{d\theta}{2\pi}
\end{equation}
weakly. We distribute weights uniformly on the simplex and look at
\begin{equation} \lb{3.12A}
\{w_\ell\}_{\ell=1}^n \mapsto \sum_{\ell=1}^{n} w_\ell \delta_{\lambda_\ell^{(n)}} \equiv \mu_n(w_\ell)
\end{equation}
This gives a distribution, $\bbP_n^{(\lambda)}$, on measures and we'll prove these measures obey a LDP with speed $n$ and rate function $I$ given by \eqref{3.7}.  A full analysis depends on proving for each $\epsilon > 0, \, j$ and $k=1,\dots,2^j$, the probability that $\left|\tfrac{2^j}{n} \#(\ell \,|\, \lambda_\ell^{(n)} \in I_k^{(j)}) - 1 \right| \ge \epsilon$ (with $I_k^{(j)}$ given by \eqref{2.13}) goes to zero faster than exponentially in $n$. This is where Theorem \ref{T3.2} is used.

The proof will be to use projective limits with the maps of Example \ref{E2.9}. We'll get a LDP for the projections using Example \ref{E2.3A} and control the $\sup$ of the projected rate functions by a general continuity result.  It is this last fact that will show singular parts of the measure only change the rate by their impact on the total weight of the a.c.\ part.

For each $j = 1,\dots$ and $k=1,\dots,2^j$, let $I_k^{(j)}$ be given by \eqref{2.13} and $\pi_j(\mu)$ the measure in \eqref{2.13A}.  Given $\{w_\ell\}_{\ell=1}^{n}$, let $\ti{\mu}^j_n(w_\ell)$ be the measure on $\partial\bbD$ with constant a.c.\ weight on each $I^{(j)}_k$ so that
\begin{equation}\label{3.13A}
  \ti{\mu}^j_n(I^{(j)}_k) = \sum_{\lambda_\ell^{(n)} \in I_k^{(j)}} w_\ell
\end{equation}
Thus in terms of the objects given in \eqref{3.12A} and \eqref{3.13A}, we have that $\pi_j(\mu_n(w_\ell)) = \ti{\mu}^j_n(w_\ell)$.

Let $\wti{\bbP}_n^{(j)}$ be the measure on $\bbR^{2^j}$ using \eqref{3.13A} but where now the $w_\ell$ are replaced by iid exponential random variables, $W_\ell$. Thus, $\wti{\bbP}_n^{(j)}$ is the probability measure for the $\bbR^{2^j}$-valued random variable given by
\beq \no
\beta_k^n=\sum_{\lambda_\ell^{(n)} \in I_k^{(j)}} W_\ell
\eeq
Fix $j$ and take $n \to \infty$.  By Theorem \ref{T2.3B} and \eqref{3.13}, $\wti{\bbP}_n^{(j)}$ obeys a LDP with speed $n$ and rate function at the point $\vec{\beta} \equiv \{\beta_\ell\}_{\ell=1}^{2^j} \in \bbR^{2^j}$
\begin{equation}\label{3.14}
  \varphi(\vec{\beta}) = \sum_{\ell=1}^{2^j} \left[(\beta_\ell - 2^{-j}) - 2^{-j} \log(2^j \beta_\ell) \right]
\end{equation}

Recall that given two probability measures $\mu$ and $\nu$ on the same space, their KL divergence, $H(\mu|\nu)$, is given by the negative of \eqref{3.8}.  Write $\beta_\ell = \beta s_\ell$ with $\beta = \sum_{q=1}^{2^j} \beta_q$ so that $\vec{s}$ lies in a $2^j$-simplex.  Write $\mu_{\vec{s}}$ for the probability measure giving uniform weight $s_k$ to $I_k^{(j)}$ and let $\nu$ be normalized Lebesgue measure on the circle (i.e. $\mu_{\vec{s}}$ for the $\vec{s}$ with equal components, $2^{-j}$).  Then \eqref{3.14} can be rewritten:
\begin{equation}\label{3.15}
  \varphi(\vec{\beta}) = \beta - 1 - \log(\beta) + H(\nu|\mu_{\vec{s}})
  \end{equation}
Note this is the sum of a function of $\beta$ only and a function of the $s$'s only.  This is a consequence of the fact that for independent exponential random variables, $\sum_{k=1}^{N} X_k$ is independent of $\{X_j/\sum_{k=1}^{N} X_k\}_{j=1}^N$.  It makes the use of the contraction principle (which, in general, is already simple), extremely simple.

For fixed $\lambda$'s, let $\bbP_n^{(j)} = \pi_j^* \left(\bbP_n^{(\lambda)} \right)$.  This is just the contraction of $\wti{\bbP}_n^{(j)}$ under the map $G(\vec{\beta}) \equiv \vec{\beta}/\beta$ from $\bbR^{2^j}$ to the $2^j$--simplex.  By the contraction principle (Theorem \ref{T2.4}) and
\begin{equation*}
  \inf_{\beta>0} [\beta - 1 - \log(\beta)] = 0
\end{equation*}
(as it must as the rate function, \eqref{2.7D}, when $\alpha=1$), we see that for each fixed $j$, $\bbP_n^{(j)}$ obeys a LDP with speed $n$ and rate function $H(\nu|\mu_{\vec{s}})$.

Given the projection theorem (Theorem \ref{T2.4}), the following completes the proof of Theorem \ref{T3.1}:

\begin{proposition} \lb{P3.3} Let $\mu$ be an arbitrary probability measure on $\partial\bbD$ and $\nu=\frac{d\theta}{2\pi}$.  Let $\pi_j(\mu)$ be given by \eqref{2.13A}.  Then
\begin{equation}\label{3.16}
  \lim_{j \to \infty} H(\pi_j(\nu)|\pi_j(\mu)) = H(\nu|\mu)
\end{equation}
\end{proposition}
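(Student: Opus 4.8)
\emph{Reduction.} First I would note that $\pi_j(\nu)=\nu$ for every $j$: since $\nu=\frac{d\theta}{2\pi}$ assigns mass $2^{-j}$ to each arc $I_k^{(j)}$, the piecewise–constant measure \eqref{2.13A} built from $\nu$ is $\nu$ again. So \eqref{3.16} is the assertion $\lim_{j\to\infty}H(\nu\,|\,\pi_j(\mu))=H(\nu\,|\,\mu)$. Writing $\calF_j$ for the $\sigma$-algebra with atoms $I_1^{(j)},\dots,I_{2^j}^{(j)}$ — these increase with $j$ and $\bigcup_j\calF_j$ generates the Borel sets of $\partial\bbD$ — a direct computation from \eqref{2.13A} gives
\[
  H(\nu\,|\,\pi_j(\mu))=\sum_{k=1}^{2^j}\nu(I_k^{(j)})\log\frac{\nu(I_k^{(j)})}{\mu(I_k^{(j)})},
\]
the discrete relative entropy of the $\calF_j$-restrictions of $\nu$ and $\mu$ (equal to $+\infty$, consistently, exactly when some $\mu(I_k^{(j)})=0<\nu(I_k^{(j)})$). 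Thus the proposition is the classical statement that the relative entropy of restrictions to an increasing, generating filtration converges to the full relative entropy.

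\emph{The easy (upper) bound.} For $\limsup_j H(\nu\,|\,\pi_j(\mu))\le H(\nu\,|\,\mu)$ I would use the log–sum inequality (a form of Jensen for $t\mapsto t\log t$): grouping the atoms of $\calF_{j+1}$ inside each atom of $\calF_j$ shows $j\mapsto H(\nu\,|\,\pi_j(\mu))$ is nondecreasing, and grouping the a.c.\ density $d\nu/d\mu$ over each atom shows each term is $\le H(\nu\,|\,\mu)$. So the limit in \eqref{3.16} exists in $[0,\infty]$ and is $\le H(\nu\,|\,\mu)$. (Equivalently, with $\rho_j=\sum_k 2^j\mu(I_k^{(j)})\chi_{I_k^{(j)}}=d\pi_j(\mu)/d\nu$ one has $\bbE_\nu[\rho_{j+1}\,|\,\calF_j]=\rho_j$, so $(\rho_j)$ is a nonnegative $\nu$–martingale, $-\log\rho_j$ is a $\nu$–submartingale with mean $H(\nu\,|\,\pi_j(\mu))$, and $\rho_j\to w$ $\nu$–a.e.\ by Lebesgue differentiation along the dyadic net — the singular part of $\mu$ contributing a ratio that tends to $0$ $\nu$–a.e.)

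\emph{The substantive (lower) bound.} The real content is $H(\nu\,|\,\mu)\le\liminf_j H(\nu\,|\,\pi_j(\mu))$, and I would get it from the Donsker--Varadhan variational formula $H(\nu\,|\,\mu)=\sup_{g\in C_b(\partial\bbD)}\big(\int g\,d\nu-\log\int e^{g}\,d\mu\big)$ together with its trivial finite–dimensional case: optimizing $\int h\,d\nu-\log\int e^{h}\,d\mu$ over functions $h$ constant on the atoms of $\calF_j$ returns exactly the sum displayed above, i.e.\ $H(\nu\,|\,\pi_j(\mu))$. Fix $g\in C_b(\partial\bbD)$; by compactness of $\partial\bbD$, $g$ is uniformly continuous, so the step functions $g_j$ obtained by sampling $g$ on each arc of $\calF_j$ satisfy $\|g_j-g\|_\infty\to0$, hence $\int g_j\,d\nu\to\int g\,d\nu$ and $\log\int e^{g_j}\,d\mu\to\log\int e^{g}\,d\mu$ (using $\int e^g\,d\mu\ge e^{-\|g\|_\infty}>0$). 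Since $g_j$ is an admissible competitor at level $j$, $\int g_j\,d\nu-\log\int e^{g_j}\,d\mu\le H(\nu\,|\,\pi_j(\mu))$; letting $j\to\infty$ and then taking the supremum over $g$ gives the claim, and with the upper bound this proves \eqref{3.16}.

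\emph{Where the difficulty sits.} I expect the lower bound to be the only genuine obstacle, and it is instructive to see why the naive route fails. Using $\int\rho_j\,d\nu=1$ one may write $H(\nu\,|\,\pi_j(\mu))=\int\varphi(\rho_j)\,d\nu$ with $\varphi$ as in \eqref{2.7B}; applying Fatou to $\rho_j\to w$ then yields only $\liminf_j H(\nu\,|\,\pi_j(\mu))\ge\int\varphi(w)\,d\nu=H(\nu\,|\,\mu)-\mu_s(\partial\bbD)$, which is strictly too weak when $\mu_s\ne0$. The reason is that $(\rho_j)$ is \emph{not} uniformly $\nu$–integrable — it loses exactly the mass $\mu_s(\partial\bbD)$ in the limit — and that escaping mass contributes precisely $\mu_s(\partial\bbD)$ to the entropy: on an arc $I$ carrying singular mass, $\int_I\varphi(\rho_j)\,d\nu=\mu(I)-\nu(I)-\nu(I)\log\tfrac{\mu(I)}{\nu(I)}\to\mu(I)$ as $\nu(I)\to0$, by $x\log x\to0$. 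The Donsker--Varadhan route sidesteps this bookkeeping by testing against bounded continuous functions rather than against the densities $\rho_j$; the elementary alternative is to split the level-$j$ arcs into ``bulk'' arcs (where $\rho_j$ stays bounded, controlled by uniform integrability of $\bbE_\nu[w\,|\,\calF_j]$) and ``spike'' arcs (where $\rho_j\to\infty$, controlled by $x\log x\to0$), but it is more laborious and I would not take that path.
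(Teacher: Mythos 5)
Your proof is correct, and it splits exactly where the paper's does: an easy upper bound and a substantive lower bound. The upper bound is the same argument in both cases — Jensen's inequality applied arc by arc (the paper's \eqref{3.17}), with your monotonicity observation matching Remark 2 following the proposition. The genuine difference is in the lower bound. The paper disposes of it in one line: $\pi_j(\mu)\to\mu$ weakly, and $H$ is jointly weakly lower semicontinuous by \cite[Theorem 2.3.4]{OPUC1}. You instead prove the needed semicontinuity inline via the Donsker--Varadhan variational formula, using uniform continuity of a test function $g\in C_b(\partial\bbD)$ to approximate it by step functions adapted to the dyadic arcs, which are admissible competitors for the discrete entropy $H(\pi_j(\nu)|\pi_j(\mu))$. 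The two routes are close in spirit (the cited semicontinuity result is itself proved by a variational/duality argument), but yours is self-contained, which is a real advantage in an expository context, and it generalizes verbatim to the non-Lebesgue reference measures $\nu$ needed for the Killip--Simon case (Remark 1). Your closing diagnosis is also accurate and worth keeping: applying Fatou to $\int\varphi(\rho_j)\,d\nu$ with $\varphi$ as in \eqref{2.7B} and $\rho_j=d\pi_j(\mu)/d\nu$ really does lose exactly $\mu_s(\partial\bbD)$ because the martingale $(\rho_j)$ is not uniformly $\nu$-integrable; both the paper's citation and your duality argument succeed precisely because they test against bounded continuous functions rather than against the densities.
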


\begin{remarks} 1.  $\pi_j(\nu) = \nu$ for this $\nu$.  We write it this way because with a slight change in the proof, it holds for any $\nu$ (and $\mu$).  This extended version is needed for the Killip--Simon theorem and other cases where the limiting empirical measure is not unweighted Lebesgue measure.

2.  By slightly expanding the argument, one sees that $H(\pi_j(\nu)|\pi_j(\mu))$ is monotone increasing in $j$.

3.  It is worth repeating that this holds independently of the singular part of $\mu$ and is responsible for the fact that rate is the same for two $\mu$'s with identical a.c.\ parts independently of their singular parts.
\end{remarks}

\begin{proof} By convexity of $y \mapsto -\log y$ and Jensen's inequality, for any positive function $h$ and probability measure $d\eta(y)$, we have that
\begin{equation*}
  -\int \log h(y) \, d\eta(y) \ge -\log \left(\int h(y) \, d\eta(y) \right)
\end{equation*}
Thus if $d\mu$ has the form \eqref{1.1} and $W_k^{(j)} = 2^j \int_{I_k^{(j)}} w(\theta) \, \tfrac{d\theta}{2\pi}$, we have that
\begin{align}
-\int_{I_k^{(j)}} \log(w(\theta)) \, 2^j\frac{d\theta}{2\pi} &= -\log(W_k^{(j)}) - \int_{I_k^{(j)}} \log \left( \frac{w(\theta)}{W_k^{(j)}} \right) \, 2^j \frac{d\theta}{2\pi} \nonumber \\
                                                        &\ge -\log W_k^{(j)} \ge -\log\left( 2^j \mu(I_k^{(j)}) \right) \lb{3.17}
\end{align}
since $\log \left[\int_{I_k^{(j)}} \frac{w(\theta)}{W_k^{(j)} } 2^j \frac{d\theta}{2\pi}\right] = 0$.

On the other hand, $\pi_j(\mu) \equiv \mu^{(j)}$ is an absolutely continuous measure with weight $\sum_{k=0}^{2^j} 2^j \mu(I_k^{(j)}) \chi_{I_k^{(j)}}(\theta)$ so
\begin{equation}\label{3.17A}
  H(\nu|\mu^{(j)}) = - 2^{-j} \sum_{k=0}^{2^j} 2^j  \log\left( 2^j \mu(I_k^{(j)}) \right)
\end{equation}
which by \eqref{3.17} implies that
\begin{equation}\label{3.18}
  H(\pi_j(\nu)|\pi_j(\mu)) \le H(\nu|\mu)
\end{equation}
so, in particular,
\begin{equation}\label{3.19}
  \limsup H(\pi_j(\nu)|\pi_j(\mu)) \le H(\nu|\mu)
\end{equation}

For the other direction, $\pi_j(\mu) \to \mu$ weakly by an easy argument (the uniform closure of functions constant on the $I_k^{(j)}$ for some $j$ includes all continuous functions).  By the lower semicontinuity of $H$ jointly in the two variables (see \cite[Theorem 2.3.4]{OPUC1} which proves that $-H$ is jointly weakly upper semicontinuous)
\begin{equation}\label{3.20}
  H(\nu|\mu) \le \liminf H(\pi_j(\nu)|\pi_j(\mu))
\end{equation}
\end{proof}

%%%%%%%%%%%%%%%%%%%%%%%%%%%%%%%%%%%%%%%%%%%%%%%%%%%%%%%%%%%%%%
\section{Szeg\H{o}'s Theorem: Coefficient Side}  \lb{s4}
%%%%%%%%%%%%%%%%%%%%%%%%%%%%%%%%%%%%%%%%%%%%%%%%%%%%%%%%%%%%%%

In this section, our goal is to use the bijection $V$ from Verblunsky coefficients to measures on $\partial\bbD$ to move the CUE measures $\bbP_n$ from the measure side to measures $\wti{\bbP}_n$ on Verblunsky coefficients.  We'll prove:

\begin{theorem} \lb{T4.1} $\wti{\bbP}_n$ obeys a LDP with speed $n$ and good rate function
\begin{equation}\label{4.1}
  \ti{I}(\alpha) = - \sum_{j=1}^{\infty} \log(1-|\alpha_j|^2)
\end{equation}
\end{theorem}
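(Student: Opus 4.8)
The plan is to prove this directly on the coefficient side, \emph{without} invoking Theorem~\ref{T3.1}: since $V$ is a homeomorphism, simply transporting the measure‑side LDP would force $\ti I(\alpha)=I(V(\alpha))$, i.e.\ it would presuppose the Szeg\H{o}--Verblunsky sum rule, which is exactly the output the whole argument is meant to produce. Instead, the engine is the Killip--Nenciu formula (Theorem~\ref{T4.2}/Appendix): under CUE$(n)$ the Verblunsky coefficients $\alpha_0,\dots,\alpha_{n-2}\in\bbD$ and $\alpha_{n-1}\in\partial\bbD$ are \emph{independent}, with $\alpha_j$ of density $\tfrac{n-1-j}{\pi}(1-|\alpha_j|^2)^{n-2-j}$ on $\bbD$ for $j\le n-2$ and $\alpha_{n-1}$ uniform on $\partial\bbD$. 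Thus $\wti\bbP_n$ is an explicit product measure, and the rest is a matter of feeding it through Section~\ref{s2}.

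Step 1 would be a finite‑dimensional LDP. Fix $k$; for $n>k$ the marginal of $(\alpha_0,\dots,\alpha_{k-1})$ under $\wti\bbP_n$ has density $\bigl(\prod_{j=0}^{k-1}\tfrac{n-1-j}{\pi}\bigr)\prod_{j=0}^{k-1}(1-|\alpha_j|^2)^{n-2-j}$ on $\bbD^k$. Writing $n-2-j=(n-k-1)+(k-1-j)$ with $k-1-j\ge 0$, I would recast this density as $Z_n^{-1}e^{-(n-k-1)G(\alpha)}F(\alpha)$ on $U:=\bbD^k$, with $G(\alpha)=-\sum_{j=0}^{k-1}\log(1-|\alpha_j|^2)$ and $F(\alpha)=\prod_{j=0}^{k-1}(1-|\alpha_j|^2)^{k-1-j}$, and then check the hypotheses of Theorem~\ref{T2.2}: $G$ is continuous and nonnegative on the bounded set $\bbD^k$, $G\to\infty$ at $\partial(\bbD^k)$ and $\inf G=G(0)=0$; while $0\le F\le 1$ is bounded with $\supp F\subset\overbar{\bbD}^k$, so $F\in L^1(\bbR^{2k})$, and $F$ is bounded below on compact subsets of $\bbD^k$. (One checks incidentally that $\int_U e^{-(n-k-1)G}F\,d^{2k}\alpha$ is indeed the reciprocal of the Killip--Nenciu normalizing constant, so the density really is in the normalized form of Theorem~\ref{T2.2}.) This gives an LDP for the marginal with speed $n-k-1$ and good rate function $G$; changing the speed from $n-k-1$ to $n$ is permissible since their ratio tends to $1$ (in the spirit of the remark after Theorem~\ref{T2.1}), so the marginal obeys an LDP with speed $n$ and good rate $G_k(\alpha):=-\sum_{j=0}^{k-1}\log(1-|\alpha_j|^2)$ (interpreted as $+\infty$ on $\overbar{\bbD}^k\setminus\bbD^k$).

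Step 2 would assemble these via the projective‑limit theorem~\ref{T2.4}, with $X_k=\overbar{\bbD}^k$, coordinate projections $\pi_k$ and $\pi_{k+1,k}$ (drop the last coordinate), and $X$ the space of Verblunsky sequences identified with the projective limit as in the discussion preceding Theorem~\ref{T2.4}. Each $\pi_k^*(\wti\bbP_n)$ obeys an LDP with speed $n$ and good rate function $G_k$, so Theorem~\ref{T2.4} yields the LDP for $\wti\bbP_n$ with speed $n$ and good rate function
\[
\sup_k G_k(\pi_k(\alpha))=\lim_{k\to\infty}\Bigl(-\sum_{j=0}^{k-1}\log(1-|\alpha_j|^2)\Bigr)=-\sum_{j=0}^{\infty}\log(1-|\alpha_j|^2),
\]
the limit being monotone because every term is nonnegative; this is $\ti I(\alpha)$ of~\eqref{4.1} (up to the harmless relabeling of the summation index). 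Note this rate is $+\infty$ on every finite sequence, consistent with $\wti\bbP_n$ charging only length‑$n$ sequences whose last coefficient lies on $\partial\bbD$.

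The one place that will need genuine care is the topological bookkeeping in Step 2: the space of Verblunsky sequences is not a product space, and $\wti\bbP_n$ lives on finite sequences while the limiting rate function concerns infinite ones. This is handled just as in the abstract projective‑limit setup, and is essentially harmless since $\ti I\equiv+\infty$ on the finite‑sequence part. Apart from that, the whole proof reduces to the Killip--Nenciu formula (established separately in the Appendix) together with mechanical applications of Theorems~\ref{T2.1}, \ref{T2.2}, and~\ref{T2.4}, so I do not expect a substantive obstacle beyond Killip--Nenciu itself.
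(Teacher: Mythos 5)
Your Step 1 is exactly the paper's argument: Killip--Nenciu gives independence of the $\alpha_j$, the marginal density of $(\alpha_0,\dots,\alpha_{k-1})$ is recast as $Z_n^{-1}e^{-(n-k-1)G}F$, and Theorems \ref{T2.1} and \ref{T2.2} yield the finite--dimensional LDP with speed $n$ and good rate $G_k$. The genuine gap is precisely the point you flag and then dismiss as ``essentially harmless'': you cannot identify the space of Verblunsky sequences with the projective limit of the $\overline{\bbD}^k$. The abstract projective limit of the system $(\overline{\bbD}^k,\pi_{k+1,k})$ is the product $\overline{\bbD}^\infty$, whereas the space $Y$ actually carrying $\wti{\bbP}_n$ is the union of $\bbD^\infty$ with the finite sequences ending on $\partial\bbD$, equipped with the non--product topology of Section \ref{s1}. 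On $Y$ the coordinate map $\pi_k$ is not even defined on sequences of length less than $k$, and near a finite sequence of length $N_0$ the topology places no constraint on coordinates beyond $N_0$, so the hypotheses of Theorem \ref{T2.4} (continuous surjections $\pi_k$ that jointly separate points) fail. Theorem \ref{T2.4} therefore cannot be applied to $\wti{\bbP}_n$ on $Y$ directly, and ``the abstract projective--limit setup'' does not by itself handle this.

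The paper's fix is short but is a genuine extra step that your write--up needs: define auxiliary measures $\bbP^\#_N$ on the honest product space $X=\overline{\bbD}^\infty$ by extending the length--$N$ Killip--Nenciu sequence by zeros. Your Step 1 computation applies verbatim to the marginals of $\bbP^\#_N$, and Theorem \ref{T2.4} then legitimately gives an LDP for $\bbP^\#_N$ on $X$ with rate $-\sum_{k\ge 0}\log(1-|\alpha_k|^2)$. Now let $f:X\to Y$ be the truncation map that cuts a sequence at its first entry lying on $\partial\bbD$; $f$ is continuous, $(\bbP^\#_N)^{(f)}=\wti{\bbP}_N$, and the contraction principle (Theorem \ref{T2.3A}) transfers the LDP to $Y$. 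One checks that $I^{(f)}$ is the rate function \eqref{4.1}: an infinite sequence in $\bbD^\infty$ has a unique preimage, while every preimage of a finite sequence contains an entry of modulus one and so has infinite rate, consistent with your closing remark. With this step inserted, your proof coincides with the paper's.
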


\begin{remarks}  1. Since $V$ is a homeomorphism, Theorem \ref{T3.1} implies that $\wti{\bbP}_n$ obeys a LDP.  The point is that \eqref{4.1} expresses the rate on the $\alpha$--side.  In fact, we'll independently prove that $\wti{\bbP}_n$ obeys a LDP.  In further applications, it is useful that we only have to prove a LDP on one side.

2. Of course, this result and \eqref{3.7} imply the Szeg\H{o}--Verblunsky sum rule \eqref{1.4}.
\end{remarks}

One part of the proof is an explicit formula for $\wti{\bbP}_n$ found by Killip--Nenciu \cite{KN}.
For a proof, see Killip--Nenciu \cite{KN}, or the Appendix below.
\begin{theorem} \lb{T4.2} $\wti{\bbP}_n$ is supported on the $n$--point $\alpha$'s, i.e. $\alpha_0,\dots,\alpha_{n-2} \in \bbD, \alpha_{n-1} \in \partial\bbD$ and given by
\begin{align}
  d\wti{\bbP}_n(\alpha_0,\dots,\alpha_{n-1}) &= \prod_{j=0}^{n-1} d\kappa_{n-2-j}(\alpha_j)  \lb{4.2} \\
  d\kappa_\ell(\alpha) &= \frac{\ell+1}{\pi} (1-|\alpha|^2)^\ell d^2\alpha &\textrm{ on  } \bbD; \, \ell \ge 0 \lb{4.3} \\
  d\kappa_{-1} (\alpha = e^{i\theta}) &= \frac{d\theta}{2\pi} & \textrm{ on } \partial\bbD \lb{4.4}
\end{align}
\end{theorem}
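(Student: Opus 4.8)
The plan is to derive \eqref{4.2} by transporting the Weyl--type joint density \eqref{3.6} of the eigenangles $\{\theta_j\}$ and weights $\{w_j\}$ through the finite Verblunsky bijection $V_n$. Since $V_n$ already maps the Weyl data $(\theta_1,\dots,\theta_n,w_1,\dots,w_{n-1})$ onto the $n$--point $\alpha$'s, the whole content of the theorem is the Jacobian of this change of variables, which I would compute by induction on $n$, at each step peeling off the single coefficient $\alpha_0$. Two preliminary facts keep the bookkeeping under control. First, the determinant identity
\begin{equation*}
  \prod_{1\le i<j\le n}|\lambda_i-\lambda_j|^2\cdot\prod_{j=1}^{n}w_j=\prod_{\ell=0}^{n-2}\bigl(1-|\alpha_\ell|^2\bigr)^{\,n-1-\ell},
\end{equation*}
which follows by noting that $\bigl(\sqrt{w_j}\,\overline{\varphi_k(\lambda_j)}\bigr)_{1\le j\le n,\,0\le k\le n-1}$ is unitary (it carries the orthonormal basis $\{\varphi_k\}_{k=0}^{n-1}$ of $L^2(d\mu)\cong\bbC^n$ to the orthonormal basis $\{\sqrt{w_j}^{\,-1}\chi_{\{\lambda_j\}}\}$), taking the squared modulus of its determinant, and inserting the Vandermonde determinant together with $\|\Phi_k\|^2=\prod_{\ell<k}(1-|\alpha_\ell|^2)$. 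Second, $\bar\alpha_0=\int e^{i\theta}\,d\mu(\theta)=\sum_{j=1}^{n}w_j\lambda_j=\jap{Ue_1,e_1}$ is a diagonal matrix element of $U$, so under Haar measure $\alpha_0$ has the rotation--invariant law $d\kappa_{n-2}$.

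For $n=1$ the statement is immediate: $\mu=\delta_{e^{i\theta_1}}$, $\Phi_1(z)=z-e^{i\theta_1}$, so $\alpha_0=e^{-i\theta_1}$ and \eqref{3.6} (which is simply $d\theta_1/2\pi$) becomes $d\kappa_{-1}(\alpha_0)$. For the inductive step, let $\mu_1$ be the once coefficient--stripped measure, i.e.\ the $(n-1)$--point measure with Verblunsky coefficients $(\alpha_1,\dots,\alpha_{n-1})$, and let $(\{\theta'_k\},\{w'_k\})$ be its Weyl data. The step reduces to the assertion that, \emph{under $\bbP_n$, the measure $\mu_1$ is distributed as $\bbP_{n-1}$ and is independent of $\alpha_0$, whose law is $d\kappa_{n-2}$}. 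Granting this, the inductive hypothesis applied to $\mu_1$ gives that $(\alpha_1,\dots,\alpha_{n-1})$ has law $\prod_{i=0}^{n-2}d\kappa_{(n-1)-2-i}(\alpha_{i+1})=\prod_{j=1}^{n-1}d\kappa_{n-2-j}(\alpha_j)$, and multiplying by the independent factor $d\kappa_{n-2}(\alpha_0)$ yields $\prod_{j=0}^{n-1}d\kappa_{n-2-j}(\alpha_j)$, which is \eqref{4.2}.

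To prove the italicized assertion I would use the Szeg\H{o} recursion \eqref{1.2} relating the monic orthogonal polynomials of $\mu$ to those of $\mu_1$ (one step of coefficient stripping, realized by a $2\times2$ transfer matrix depending on $\alpha_0$ and $z$) to express $(\theta_1,\dots,\theta_n,w_1,\dots,w_{n-1})$ as an explicit function of $(\alpha_0,\theta'_1,\dots,\theta'_{n-1},w'_1,\dots,w'_{n-2})$, and then evaluate the Jacobian determinant of this map. Substituting the determinant identity above into \eqref{3.6} for $\mu$, and its analogue for $\mu_1$, the factor $|\Delta(\lambda)|^2/\prod_j w_j$ produced by \eqref{3.6} meets the Jacobian, which contains $\prod_j w_j$ multiplicatively in exactly the way needed for the two to decouple: one is left with $\tfrac{n-1}{\pi}(1-|\alpha_0|^2)^{n-2}\,d^2\alpha_0=d\kappa_{n-2}(\alpha_0)$ times a verbatim copy of \eqref{3.6} at level $n-1$ in the $(\{\theta'\},\{w'\})$ variables. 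What makes the recursion self--consistent is that the exponent $n-1-\ell$ attached to $1-|\alpha_\ell|^2$ in the identity for $\mu$ equals the exponent $(n-1)-1-(\ell-1)$ that the same quantity carries in the identity for $\mu_1$ after the index shift $\ell\mapsto\ell-1$, and the normalizing constants $1/(n(2\pi)^n)$ telescope correspondingly.

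The main obstacle is precisely this Jacobian: writing the coefficient--stripping map in Weyl coordinates and evaluating its Jacobian determinant is the one genuinely computational step, and it carries the entire combinatorial weight of the theorem. Everything else --- the unitarity/determinant identity, the identification of $\bar\alpha_0$ with a diagonal entry of a Haar unitary and the resulting law $d\kappa_{n-2}$, and the inductive assembly --- is routine. (Alternatively one can compute the full Jacobian of the Weyl--to--Verblunsky map in one shot without stripping, but this amounts to the same calculation organized differently.) This Jacobian is in essence the one carried out by Killip and Nenciu; an inductive form of it is what the Appendix supplies.
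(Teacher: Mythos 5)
Your route is viable but genuinely different from the one the paper takes. You work entirely in spectral (Weyl) coordinates: transport the explicit density \eqref{3.6} through the finite Verblunsky map $V_n$ and compute the Jacobian, organized inductively via one step of coefficient stripping. The preliminary facts you cite are all correct: the identity $\prod_{i<j}|\lambda_i-\lambda_j|^2\prod_j w_j=\prod_{\ell}(1-|\alpha_\ell|^2)^{n-1-\ell}$ does follow from unitarity of $\bigl(\sqrt{w_j}\,\overline{\varphi_k(\lambda_j)}\bigr)$ together with $\|\Phi_k\|^2=\prod_{\ell<k}(1-|\alpha_\ell|^2)$; the marginal of $\alpha_0$ as a diagonal entry of a Haar unitary is indeed $d\kappa_{n-2}$; the base case and the index bookkeeping in the inductive assembly check out. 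The paper's Appendix instead works at the level of the group: Haar measure on $\bbU(n)$ factors, under the Householder splitting, as the product of the rotation--invariant measure on the sphere and Haar measure on $\bbU(n-1)$ (Diaconis--Shahshahani), and the $\calL\calM$ (or AGR) factorization of the CMV matrix identifies the Verblunsky coefficients of the $\bbU(n-1)$ factor as $(\alpha_1,\dots,\alpha_{n-1})$. That route buys exactly what your route must pay for: the independence of $\alpha_0$ from the stripped measure and the fact that the latter is $\bbP_{n-1}$--distributed come for free from the product structure of Haar measure, with no Jacobian anywhere; the only computation is the density of one coordinate of a uniform point on $\bbC^n_1$.

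The gap is the one you name yourself: the Jacobian of the map $(\alpha_0,\theta_1',\dots,w_{n-2}')\mapsto(\theta_1,\dots,w_{n-1})$ is asserted to contain $\prod_j w_j/\prod_j w_j'$ and $(1-|\alpha_0|^2)^{-1}$ ``in exactly the way needed,'' but it is never computed, and by your own accounting it carries the entire combinatorial weight of the theorem. It is not a routine evaluation: the eigenvalues of $\mu$ are the zeros of $\Phi_n$, which depend on $(\alpha_0,\mu_1)$ only implicitly, so there is no closed-form expression for the map whose determinant you propose to take; the known derivations of this Jacobian (Killip--Nenciu need it for general $\beta$) pass through nontrivial intermediate parametrizations and are several pages long. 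So as written this is a correct and internally consistent outline whose decisive step is outsourced to a known but unproved identity. To close it you must either carry out that Jacobian computation or replace it by the group-theoretic argument of the Appendix, in which case the Jacobian never arises.
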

%For a proof, see Killip--Nenciu \cite{KN}, or the Appendix below.
%
%but we can describe why the distribution of $\alpha_0$ is what it is.  Since Haar measure is left invariant, the induced distribution of $Ue_1$ describes a unitary invariant distribution on the unit sphere, so the usual Euclidean measure.  Since $\bar{\alpha}_0 = \jap{e_1,Ue_1}$ ($=\int z d\mu(z)$; see \cite[eqn (4.1.17)]{OPUC1}), $\alpha_0$ is distributed as the first complex component, $v_1$, of a random rotation invariant unit vector which is exactly $d\kappa_{n-2}$ (since $\tfrac{n-1}{\pi}(1-|v_1|^2)^{n-2}$ is the size of the ``slice'' $\{(v_2,\dots,v_n) \in \bbC^n \, | \, |(v_2,\dots,v_n)|^2 = (1 - |v_1||^2)\}$).
%
The density is thus $\tfrac{(n-1)!}{\pi^n}\prod_{j=0}^{n-2} (1-|\alpha_j|^2)^{n-2-j} d^2\alpha_j$.  The $\prod_{j=0}^{n-2}(1-|\alpha_j|^2)^n = \exp\left[-n(-\sum_{j=0}^{n-2} \log(1-|\alpha_j|^2))\right]$ suggests that the rate function is given by \eqref{4.1} but naively, the $(n-1)!$ looks worrying.  That it isn't comes from the magic of projective limits.

Projective limits are especially simple in the case of independent variables. So look at measures $\{\bbP^\#_N\}_{N=1}^\infty$ on $X = \overbar{\bbD}^\infty$ by putting $\bbP^\#_N$ on sequences with $\alpha_j=0; \, j \ge N$ and distributing $\{\alpha_j\}_{j=0}^{N-1}$ according to $\wti{\bbP}_N$.  Then, if $j$ is fixed and $N > j$, then
\begin{align}
  d\pi^*_j(\bbP^\#_N)(\alpha_0,\dots,\alpha_{j-1}) =& \frac{(N-1)\cdots(N-j)}{\pi^j}\left[\prod_{k=0}^{j}(1-|\alpha_k|^2)\right]^{N-2-j} \nonumber \\
                                                      & \, \quad \prod_{k=0}^{j-1}\left[(1-|\alpha_k|^2)^{j-k}d^2\alpha_k\right] \lb{4.5}
\end{align}
The leading factor is polynomial in $N$, so unimportant for $a_N = N$ LDPs.  Using Theorems \ref{T2.1} and \ref{T2.2}, we see that $\pi^*_j(\bbP^\#_N)$ obeys a LDP at speed $N$ and rate function
\begin{equation}\label{4.6}
  I_j(\alpha_0,\dots,\alpha_{j-1}) = -\sum_{k=0}^{j-1} \log(1-|\alpha_k|^2)
\end{equation}
Since
\begin{equation}\label{4.7}
  \sup_j I_j(\pi_j(\alpha)) = -\sum_{k=0}^{\infty} \log(1-|\alpha_k|^2)
\end{equation}
this proves, on account of Theorem \ref{T2.4}, that $\{\bbP_N^\#\}_{N=1}^\infty$ obeys a LDP with speed $N$ and rate function \eqref{4.1}.

The measures we are really interested in aren't on the product space $X$ but on the space, $Y$, which is the union of $\bbD^\infty$ with finite sequences in $\bbD^{N-2} \times \partial\bbD$ with the topology described in Section \ref{s1}.  There is a natural map $f:X \to Y$, that given $\alpha \in X$ maps to $\alpha$ if all $\alpha_j \in \bbD$ and drops the $\alpha_k, \, k>j$ is $\alpha_j$ if the first $\alpha_\ell \in \partial\bbD$.  Then $f$ is continuous and $(\bbP_N^\#)^{(f)} = \wti{\bbP}_N$, so Theorem \ref{T2.3A} completes the proof of Theorem \ref{T4.1}.

%%%%%%%%%%%%%%%%%%%%%%%%%%%%%%%%%%%%%%%%%%%%%%%%%%%%%%%%%%%%%%
\section{The Killip--Simon Theorem} \lb{s5}
%%%%%%%%%%%%%%%%%%%%%%%%%%%%%%%%%%%%%%%%%%%%%%%%%%%%%%%%%%%%%%

The large deviation proof of the Killip--Simon Theorem is similar to the one in Sections 3 and 4 with some changes and additions which we briefly describe.

\begin{SL}
  \item[(a)] One uses GUE instead of CUE (GNR use GOE which differs from GUE by some factors of 2).  Thus the measure on random $n \times n$ self--adjoint matrices has $\{\re M^{(n)}_{ij}\}_{1 \le i \le j \le n}$ and $\{\im M^{(n)}_{ij}\}_{1 \le i < j \le n}$ Gaussian iid with mean zero and $\bbE([M_{ii}^{(n)}]^2) = \bbE([\re M^{(n)}_{ij}]^2)=  \bbE([\im M^{(n)}_{ij}]^2)=n^{-1}$ for any $i<j$.

  \item[(b)] The eigenvalue distribution has $\lambda_j \in \bbR$ with distribution
                 \begin{equation} \lb{5.1A}
                 \left[\prod_{i<j} |\lambda_i - \lambda_j|^2\right] e^{-n\sum_{j=1}^{n} \lambda_j^2}
                 \end{equation}
       so the empirical measure converges to the equilibrium measure in a quadratic external field, i.e.\ the minimizer for $-\int \log |x-y| \, d\mu(x) \, d\mu(y) + 2 \int x^2 \, d\mu(x)$.  It is well--known \cite[Exercise 2.6.4]{AGZ} that the minimizer is the semicircle law $d\nu_0(x) \equiv \pi^{-1} (1-x^2)^{1/2} \chi_{[-1,1]} (x) dx$.  To agree with the Killip--Simon notation, one rescales the matrix so the support is $[-2,2]$.

  \item[(c)] By the argument of \cite{BAG}, the empirical measure converges to $\nu_0$ and by mimicking the proof of Theorem \ref{T3.1}, the contribution of the part of the spectral measure on $[-2,2]$ is just $H(\nu_0|\mu)$.  Thus the weight in the Killip--Simon quasi--Szeg\H{o} integral is exactly the Wigner semicircle weight.

  \item[(d)] As we've seen, a single point in the measure, if the point is in the bulk, involves the increase of $H(\nu|\mu)$ due to the weight having a smaller integral.  But if the point is outside $[-2,2]$, there is a contribution due to the location, $\lambda_0$, of the eigenvalue (the impact of the weight is the same whether the point is in the bulk or outside it).  By looking at the $\log$ of the part of \eqref{5.1A} depending on $\lambda_0$, one sees that the decrease in the eigenvalue density involves $\lambda_0$ interacting with $n$ eigenvalues.  The decrease is approximately $\exp(-nF(\lambda_0))$ where $F$ is the potential in the quadratic external field in the equilibrium measure (this idea is due to Ben Arous et al.\ \cite{BADG}).  It is known that this function is the same as the $F$ in equation \eqref{1.8} (see \cite[eqn 1.13]{APS} or \cite[proof of Thm 3.6]{DKMVZ}) so the Killip--Simon $F$ is just an external field potential.

  \item[(e)] For finitely many eigenvalues outside $[-2,2]$ you just get the sums of single costs since the interaction between eigenvalues is $\textrm{O}(1)$, not $\textrm{O}(n)$.  Handling infinitely many eigenvalues converging to $\pm 2$ requires a careful use of projective limits (see \cite{GNR1}).

  \item[(f)] For the coefficient side, Killip--Nenciu is replaced by earlier results of Dumitriu--Edelman \cite{DE} (whose work motivated Killip and Nenciu) who found the distribution of Jacobi parameters for GUE and GOE.  The $\{b_j\}_{j=1}^n$ are Gaussian (with $\textrm{O}(n)$ widths leading to the $b_j^2$ term in the Killip--Simon sum rule).  The $\{a_j^2\}_{j=1}^{n-1}$ are gamma distributed, essentially behaving like sums of exponential random variables and so we get the $G(a_j)$ terms.  Thus $G$ occurs in the sum rule as the rate function for suitable gamma distributions.

  \item[(g)] There is an issue involving the equality of the two sides of the sum rule that we want to discuss, addressed in a related way in Gamboa-Rouault \cite{GR1}.  The natural setting for the LDP for measures is the space, $X'$, of all probability measures on $\bbR$, and for Jacobi parameters the Polish space $Y' \equiv [\bbR \times (0,\infty)]^\infty$ with finite sequences added to it.  The issue is that the inverse Jacobi map isn't defined for all measures but only those with all moments finite and, in general, this inverse map is many--to--one in certain cases where the measure has unbounded support.  Let $X_k$ be the set of measures supported in $[-k-2,k+2]$ for $k=1,2,\dots$ and $Y_k$ its image under the inverse Jacobi maps.  Let $X = \cup_{k=1}^\infty X_k$ and $Y = \cup_{k=1}^\infty Y_k$.  The rate functions are infinite on the complements of these sets. The Jacobi map is a well defined bijection of $Y$ to $X$ but in the relative topologies is not continuous nor is its inverse continuous!  However, it is a homeomorphism restricted to each $Y_k$.  Moreover since the probabilities under $\bbP_N$ (resp.\ $\wti{\bbP}_N)$) of $X_k$ (resp.\ $Y_k$) go to one exponentially fast in $n$ (with rate going to infinity with $k$), it is not hard to prove LDP's for the restrictions of these measures to $X_k$ and $Y_k$ renormalized (by dividing by the probabilities of $X_k$ and $Y_k$) with the same rate functions.  Thus the fact that $J$ is a homeomorphism of $Y_k$ to $X_k$ lets us conclude equalities of the two rate functions and so the sum rule.
\end{SL}

%%%%%%%%%%%%%%%%%%%%%%%%%%%%%%%%%%%%%%%%%%%%%%%%%%%%%%%%%%%%%%
\section{Further Developments} \lb{s6}
%%%%%%%%%%%%%%%%%%%%%%%%%%%%%%%%%%%%%%%%%%%%%%%%%%%%%%%%%%%%%%

Finally, we want to make a few comments about the strategy of the last three sections applied to other sum rules.  We are aware of four papers on this approach. Gamboa, Nagel and Rouault have two preprints \cite{GNR2, GNR3} besides other results in their original paper \cite{GNR1}.  And the authors of the current paper are preparing a work \cite{BSZ2} using these methods in a wider context.

In \cite[Section 2.8]{OPUC1}, Simon found a sum rule involving $-\int (1-\cos(\theta)) \log(w(\theta)) \, \tfrac{d\theta}{2\pi}$ on the measure side and made a conjecture concerning
\begin{equation}\label{6.1}
  -\int \log(w(\theta)) \, d\eta(\theta)
\end{equation}
where
\begin{equation}\label{6.2}
  d\eta(\theta) = Z^{-1} \prod_{j=1}^{k} (1-\cos(\theta - \theta_j))^{m_j} d\theta
\end{equation}
where $Z$ is a normalization factor to make $d\eta$ into a probability measure.  There developed a huge literature on these so called higher order sum rules for OPUC and OPRL including \cite{DK,GZ,Kupin,LNS,Lukic,Lukic2,NPVY}.

The key to understanding such sum rules (for OPUC) in the context of large deviations is to replace Haar measure, $d\bbP_N$, by
\begin{equation}\label{6.3}
  Z_N^{-1} \exp\left[-N \sum_{j=1}^{N} V(\lambda_j)\right] d\bbP_N
\end{equation}
where $V$ is a function on $\partial\bbD$ and $\{\lambda_j\}_{j=1}^N$ are the eigenvalues.  It is well known (see \cite[Section 2.6]{AGZ}) that when $V$ is nice enough, we will get $d\eta$ as the equilibrium measure if
\begin{equation}\label{6.4}
  V(e^{i\theta}) = 2 \int \log |e^{i\theta} - e^{i\psi}| \, d\eta(\psi)
\end{equation}
In a forthcoming paper \cite{BSZ2}, the current authors study this when $d\eta$ is given by \eqref{6.2}.  In the cases we study, $V(e^{i\theta})$ is a finite linear combination of $\cos(m\theta)$.  In terms of $U$, if $e^{i\theta_j}$ are the eigenvalues, $\sum_{j=1}^{n} \cos(m\theta_j) =  \re (\tr(U^m))$ which one can write in terms of Verblunsky coefficients using the CMV representation of $U$.  We obtain a large deviations proof of the $(1-\cos(\theta))$ sum rule of Simon and the gems of Simon--Zlato\v{s} \cite{SZ}. In addition, we prove a partial special case of a conjecture of Lukic \cite{Lukic} that replaces a wrong conjecture of Simon \cite[Section 2.8]{OPUC1}, providing evidence for Lukic's conjecture.  GNR have a paper \cite{GNR3} that discusses in some detail the case $V(\theta) = \cos(\theta)$ where the random matrix model has been studied by Gross--Witten \cite{GW} whose names GNR apply to the model.  They note that formally the large deviations argument leads to a sum rule but for technical reasons, they aren't able to provide a proof.  By using some results from the theory of OPUC, we do prove sum rules in this and the other cases.

In their original paper \cite{GNR1}, GNR introduce two new results they call magic sum rules by using large deviations on two matrix models.  These models have free parameters and lead to families of sum rules.  In these models the continuous spectrum is an interval $[\alpha,\beta]$.  There are three classes of sum rules where the KL divergence has $H(\nu|\mu)$ with
\begin{equation*}
  d\nu(x) = \chi_{[\alpha,\beta]}(x) (x-\alpha)^{\sigma/2}(\beta-x)^{\tau/2} \, dx
\end{equation*}
with $(\sigma,\tau)$, one of $(1,1), (-1,-1)$ or $(-1,1)$ (or $(1,-1)$). All these new sum rules lead to what one of us calls flawed gems because they have apriori conditions on the Jacobi parameters. This is because the sum rules only hold under these conditions (the restriction comes from the fact that the Jacobi parameters have to be expressible in terms of
other more basic parameters of the underlying model and only certain Jacobi parameters can be expressed that way).  The $(1,1)$ examples lead to gems that are restricted forms of the Killip--Simon Theorem and so not new gems.  The $(-1,-1)$ examples are restricted forms of Szeg\H{o}'s Theorem under the Szeg\H{o} maps (see \cite[Section 13.1]{OPUC2}) and so are not new.  Their $(-1,1)$ examples yield new flawed gems.

There has been considerable literature on proving analogs of the Killip--Simon theorem where $[-2,2]$ is replaced by a finite gap set $\fre = \fre_1 \cup \dots \cup \fre_n$ were the $\fre_j$ are disjoint intervals.  In \cite{DKS}, Damanik et al prove a Killip--Simon rule in the case that each $\fre_j$ has harmonic measure (i.e.\ measure in the equilibrium measure for $\fre$) $1/n$.  They first prove a Killip--Simon sum rule for $[-2,2]$ but with matrix valued measures and then use something they call the magic formula to get a gem for these special sets.  In \cite{GNR2}, GNR use large deviations to get a new proof of the Killip--Simon sum rule on $[-2,2]$ with matrix valued measures and then plug that into the DKS magic formula machine to get a partially new proof of the DKS result for $1/n$ harmonic measure sets.  It would be very interesting to obtain this result directly with large deviations without using the magic formula.

Recently Yuditskii \cite{Yud} proved an analog of the Killip--Simon Theorem for any finite gap set, $\fre$.  It would be very interesting to find a large deviations proof of his result.

There has been very little work on Killip--Simon type theorems for finite gap sets in OPUC.  In \cite{GNR3}, GNR obtain a sum rule and gem for $\fre = \{e^{i\theta} \, | \, \alpha \le \theta \le 2\pi - \alpha\}$ for $0 < \alpha < \pi$.  For real $\alpha$, the Verblunsky side has the expected $\sum |\alpha_j - a|^2$ form but for general $\alpha$, it has the form $\sum |\gamma_j - a|^2$ where $\gamma_j$ is a non--local function of the $\alpha$'s.  In particular, it is not clear if the finiteness of their Verblunsky side only depends on the behavior near $j = \infty$.  At least for the real case, it would be interesting to get the sum rule via the Poisson--Jensen methods used in the original Killip--Simon proof \cite{SiMero}.  It would also be interesting to understand the $\gamma_j$'s in a more conventional setting.

Finally, we note that Killip--Simon \cite{KS2} have proven a sum rule and gem for half--line Schr\"odinger operators when $V \in L^2((0,\infty); dx)$.  It would be very interesting to find a large deviation proof of this result.  In particular, what is the analog of random matrix models for the study of Schr\"odinger operators.

%%%%%%%%%%%%%%%%%%%%%%%%%%%%%%%%%%%%%%%%%%%%%%%%%%%%%%
%\section{Appendix}
%%%%%%%%%%%%%%%%%%%%%%%%%%%%%%%%%%%%%%%%%%%%%%%%%%%
\appendix
\section{}
\numberwithin{equation}{section}
We want to describe a proof of Theorem \ref{T4.2} that, because it is inductive, may be attractive to the reader.  We note that if one uses the GGT proof of the critical lemma below and uses explicit iteration instead of induction, our proof translates into a variant of the original proof of \cite{KN}.  We begin by writing $\bbU(n)$, the group of $n \times n$ unitary matrices, as a product of $\bbU(n-1)$ and $\bbC^n_1$, the set of unit vectors in $\bbC$.  Since topologically $\bbU(n)$ is not the product of $\bbU(n-1)$ and $\mathbb{S}^{2n-1}$, the unit sphere in $\bbR^{2n}$, this association cannot be continuous, but it will be measurable.  Critically, Haar measure on $\bbU(n)$ will be a product measure of Haar measure on $\bbU(n-1)$ and the rotation invariant measure on $\bbC^n_1$.

This idea goes back at least to Diaconis-Shahshahani \cite{DSh} who discussed the relation of Haar measure on $G$ to the natural product measure on $H \times G/H$ in general and illustrated this for the orthogonal group, $\mathbb{O}(n)$, using the Householder algorithm.  The discussion below is just the $\bbU(n)$--analog of their discussion of $\mathbb{O}(n)$. To be specific, let $G$ be a compact group and $H$ a closed subgroup of $G$.  Let $\pi:G \to G/H$ be the canonical projection.  Normalized Haar measure, $\mu_G$, induces a natural probability measure, $\mu_{G/H}$, on $G/H$ via
\begin{equation}\label{4.3a}
  \mu_{G/H}(A) = \mu_G(\pi^{-1}[A])
\end{equation}
and this measure is clearly invariant under the action of $G$ on $G/H$.

Let $\sigma: G/H \to G$ be a choice of representative from each coset, i.e.\ $\pi(\sigma(x)) = x$.  Then $\Sigma:G/H \times H \to G$, defined by $\Sigma(x,h) = \sigma(x)h$, is a bijection.  If one can choose $\sigma$ to be continuous, then $G$ will be homeomorphic to $G/H \times H$ under $\Sigma$ and often such a homeomorphism doesn't exist, e.g.\ if $G = \bbU(n)$ and $H=\bbU(n-1)$, so we should avoid the assumption that $\sigma$ is continuous.  It is probably true that in general one can make a measurable choice. Since we'll find an explicit such choice below for the case of interest we shall simply suppose that $\sigma$ is measurable.

\begin{proposition}  [Diaconis-Shahshahani \cite{DSh}] \lb{P4.2AA}  Suppose $\sigma$ is measurable.  Then under the bijection $\Sigma$ of $G/H \times H$ and $G$, the measure $\mu_{G/H} \otimes \mu_H$ goes to $\mu_G$.
\end{proposition}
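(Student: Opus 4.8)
The plan is to verify that the pushforward measure $\Sigma_*(\mu_{G/H}\otimes\mu_H)$ on $G$ is left-invariant under the action of $G$, and then invoke uniqueness of Haar measure. Since $\Sigma$ is a bijection and $\mu_{G/H}\otimes\mu_H$ is a probability measure, its pushforward $\lambda \equiv \Sigma_*(\mu_{G/H}\otimes\mu_H)$ is a Borel probability measure on $G$; by uniqueness of normalized Haar measure it suffices to show $\lambda$ is left-invariant, i.e.\ $\lambda(gA) = \lambda(A)$ for all $g \in G$ and Borel $A \subset G$.

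First I would unwind the definitions to get a workable formula for $\lambda$. For a bounded measurable $f$ on $G$,
\begin{equation*}
  \int_G f \, d\lambda = \int_{G/H}\int_H f(\sigma(x)h)\, d\mu_H(h)\, d\mu_{G/H}(x).
\end{equation*}
Now fix $g \in G$ and compute $\int_G f(g^{-1} y)\, d\lambda(y)$; this equals $\int_{G/H}\int_H f(g^{-1}\sigma(x)h)\, d\mu_H(h)\, d\mu_{G/H}(x)$. The key observation is that $g^{-1}\sigma(x)$ lies in the coset $g^{-1} x$ (here I write $g^{-1}x$ for the image of the coset $x$ under the $G$-action on $G/H$), so $g^{-1}\sigma(x) = \sigma(g^{-1}x)\, k(g,x)$ for a uniquely determined element $k(g,x) \in H$, namely $k(g,x) = \sigma(g^{-1}x)^{-1} g^{-1}\sigma(x)$, which is a measurable function of $x$ since $\sigma$ is measurable. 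Substituting, the inner integral becomes $\int_H f\big(\sigma(g^{-1}x)\, k(g,x)\, h\big)\, d\mu_H(h)$, and by left-invariance of $\mu_H$ on $H$ (translating $h \mapsto k(g,x)^{-1}h$ on the compact group $H$) this equals $\int_H f\big(\sigma(g^{-1}x)\, h\big)\, d\mu_H(h)$, which no longer depends on the particular cocycle $k(g,x)$.

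Finally I would carry out the outer integral: we have reduced to $\int_{G/H}\Big(\int_H f(\sigma(g^{-1}x)h)\,d\mu_H(h)\Big)\,d\mu_{G/H}(x)$, and the function $\Phi(x) \equiv \int_H f(\sigma(x)h)\,d\mu_H(h)$ satisfies $\int_{G/H}\Phi(g^{-1}x)\,d\mu_{G/H}(x) = \int_{G/H}\Phi(x)\,d\mu_{G/H}(x)$ because $\mu_{G/H}$ is $G$-invariant (as noted right after \eqref{4.3a}). Hence $\int_G f(g^{-1}y)\,d\lambda(y) = \int_G f\,d\lambda$ for all $g$ and all bounded measurable $f$, so $\lambda$ is left-invariant, hence $\lambda = \mu_G$.

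The main obstacle is purely a bookkeeping/measurability point rather than a conceptual one: one must check that $x \mapsto k(g,x)$ and $x \mapsto \Phi(x)$ are measurable so that the iterated integrals and the change of variables $h \mapsto k(g,x)^{-1}h$ inside the $H$-integral are legitimate (the latter is an $x$-dependent translation, so one is really applying Fubini together with left-invariance of $\mu_H$ fiberwise). Since $\sigma$ is assumed measurable and group multiplication and inversion are continuous, $k(g,\cdot)$ is measurable, and $\Phi$ is measurable by Fubini; these are the only places the (possibly discontinuous) choice of section $\sigma$ enters, and the argument is designed precisely so that the final answer is independent of that choice.
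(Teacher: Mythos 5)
Your proposal is correct and follows essentially the same route as the paper: both arguments rest on the cocycle identity $g\sigma(x)=\sigma(gx)W_{g,x}$ with $W_{g,x}\in H$, then integrate first over $H$ (using left-invariance of $\mu_H$ to absorb $W_{g,x}$) and then over $G/H$ (using $G$-invariance of $\mu_{G/H}$), concluding by uniqueness of normalized Haar measure. Your added attention to measurability of $x\mapsto k(g,x)$ is a welcome refinement of a point the paper passes over silently, but it is not a different proof.
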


\begin{proof} Let $U \in G, \, x \in G/H$.  Then $\pi(U\sigma(x)) = Ux$ so for some $W_{U,x} \in H$, we have that
\begin{equation}\label{4.3b}
U\sigma(x)=\sigma(Ux)W_{U,x}
\end{equation}
so $U\Sigma(x,W)=\Sigma(Ux,W_{U,x}W)$ which, given the invariance of $\mu_{G/H}$ under the action of $G$ and of $\mu_H$ under left multiplication by elements of $H$, implies the image of the product measure is invariant under multiplication by $U$ (by integrating first over $W$ and then $x$).
\end{proof}

Returning to $\bbU(n)$, fix a unit vector $e_1 \in \bbC^n_1$ (it may be helpful to think of $e_1$ as the first vector, $\delta_1=(1,0,\dots,0)$, of the canonical basis of $\bbC^n$).  The map $U \mapsto Ue_1$ is a surjection of $\bbU(n)$ to $\bbC^n_1$.  The inverse image of $e_1$ is those unitaries of the form $U = \bdone \oplus W$, under the direct sum decomposition $\bbC^n = [e_1] \oplus [e_1]^\perp$, where $W$ is an arbitrary unitary on $[e_1]^\perp$. Thus the set of $W's$ is isomorphic to $\bbU(n-1)$ and, if $e_1=\delta_1$, is canonically equal to it.  This shows that the quotient group $\bbU(n)/\bbU(n-1)$  of left cosets of $U(n-1)$ is exactly $\bbC^n_1$.

To realize $\bbU(n)$ as a product of $\bbU(n-1)$ and $\bbC^n_1$, we must pick, for each $f \in \bbC^n_1$, an element $\sigma(f) \in \bbU(n)$ so that $\sigma(f) e_1 = f$, i.e. $\sigma(f)$ is in the coset associated to $f$.  By the above noted fact about topological products, we cannot make this choice continuous in $f$, but one can make it measurable, indeed continuous on $\bbC^n_1 \setminus \left \{ \bbC\cdot e_1 \right \}$, as follows.  Suppose $f$ is not colinear with $e_1$.  Then $e_1,f$ span a two dimensional subspace $\calH_f$.  We can pick another vector $e_2 \in \calH_f$ orthonormal to $e_1$ specifying it uniquely by demanding that
\begin{equation}\label{4.4a}
  \kappa \equiv \jap{f,e_2} > 0
\end{equation}
We also define
\begin{equation}\label{4.4b}
  \beta \equiv \overbar{\jap{f,e_1}}
\end{equation}
so that $\beta \in \bbD$ and
\begin{equation}\label{4.4c}
  f = \bar{\beta}e_1+\kappa e_2
\end{equation}
Since $f$ is a unit vector
\begin{equation}\label{4.4d}
  \kappa = \sqrt{1-|\beta|^2}
\end{equation}

There is an obvious unitary map on $\calH_f$ that takes $e_1$ to $f$, namely reflection, $\Theta(\beta)$, in the line along $e_1+f$, which is clearly
\begin{equation}\label{4.4d1}
      \bdone -2\jap{e_1-f,\cdot}(e_1-f)/\norm{e_1-f}^2
\end{equation}
To find its matrix form in the $e_1,e_2$ basis, we note by \eqref{4.4c}, that its first column must be $\left(
                                                                                                         \begin{array}{c}
                                                                                                           \bar{\beta} \\
                                                                                                           \kappa \\
                                                                                                         \end{array}
                                                                                                       \right)$ since it takes $e_1$ to $f$.  Its second column is then determined by orthonormality and the desire to have determinant -1 (i.e.\ a reflection).  Thus
\begin{equation}\label{4.4e}
  \Theta(\beta) = \left(
                    \begin{array}{cc}
                      \bar{\beta} & \kappa \\
                      \kappa & -\beta \\
                    \end{array}
                  \right)
\end{equation}
We define the Householder reflection, $\sigma(f)$, on $\bbC^n$ to be
$\Theta(\beta) \oplus \bdone_{n-2}$ under
$\bbC^n = \calH_f \oplus \calH_f^\perp$
$($where $\bdone_k$ is the size $k$ identity matrix$)$.
$\sigma(f)$ is given by \eqref{4.4d1}, now as an operator on $\bbC^n$.  This formula makes it clear that $f \mapsto \sigma(f)$ is continuous on $\bbC^n \setminus \{\bbC\cdot e_1\}$ and discontinuous at the points of $\bbC\cdot e_1$.  We define $\sigma$ at $\lambda e_1$ to be $\lambda \bdone$. We thus have:

\begin{proposition} [Diaconis-Shahshahani \cite{DSh}] \lb{P4.2A}  Every unitary $U \in \bbU(n)$ can be uniquely written $\sigma(f) W$ where $f=Ue_1$ and $W$ is a unitary map on $[e_1]^\perp$. This maps $\bbU(n)$ Borel bijectively to $\bbC^n_1 \times \bbU(n-1)$.  Under this map, Haar measure on $\bbU(n)$ is just the product of the rotation invariant measure on $\bbC^n_1 \cong \mathbb{S}^{2n-1}$ and Haar measure on $\bbU(n-1)$.
\end{proposition}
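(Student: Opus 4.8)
The plan is to obtain this as an immediate consequence of Proposition \ref{P4.2AA}, applied with $G = \bbU(n)$ and $H$ the stabilizer of $e_1$ under the action of $\bbU(n)$ on $\bbC^n$, i.e.\ $H = \{\bdone \oplus W \mid W \textrm{ unitary on }[e_1]^\perp\} \cong \bbU(n-1)$, together with the explicit Householder cross-section $\sigma$ constructed above. The hypotheses of Proposition \ref{P4.2AA} that must be checked are: (i) $G/H$ can be identified, as a topological (hence Borel) space, with $\bbC^n_1$ in such a way that the canonical projection $\pi \colon G \to G/H$ becomes the orbit map $U \mapsto Ue_1$; (ii) under this identification the induced measure $\mu_{G/H}$ of \eqref{4.3a} is the rotation-invariant probability measure on $\bbC^n_1 \cong \mathbb{S}^{2n-1}$; and (iii) the section $\sigma$ of \eqref{4.4d1} is Borel measurable. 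Granting these, Proposition \ref{P4.2AA} says exactly that $\Sigma(f,W) = \sigma(f)W$ carries $\mu_{G/H}\otimes\mu_H$ to $\mu_G$, which is the assertion; uniqueness of the factorization $U = \sigma(f)W$ and Borel bijectivity are part of the statement that $\Sigma$ is a Borel bijection, with explicit inverse $U \mapsto (Ue_1,\, \sigma(Ue_1)^{-1}U)$.

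For (i): the orbit map $U \mapsto Ue_1$ is continuous and surjective onto $\bbC^n_1$ (as $\bbU(n)$ acts transitively on unit vectors), and its fibres are precisely the left cosets of $H$, so it descends to a continuous bijection $G/H \to \bbC^n_1$; since $G/H$ is compact and $\bbC^n_1$ Hausdorff this is a homeomorphism, hence a Borel isomorphism, and it is plainly $\bbU(n)$-equivariant. For (ii): by \eqref{4.3a} and equivariance, $\mu_{G/H}$ is a $\bbU(n)$-invariant Borel probability measure on $\bbC^n_1$; but $\bbU(n)$ acts transitively there, so there is only one such measure, and the normalized Euclidean surface measure on $\mathbb{S}^{2n-1}$ is one (being even $\mathbb{O}(2n)$-invariant, and $\bbU(n)\subset\mathbb{O}(2n)$), so the two coincide. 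For (iii): formula \eqref{4.4d1} exhibits $f \mapsto \sigma(f)$ as continuous on the open set $\bbC^n_1 \setminus \{\bbC\cdot e_1\}$, while on the remaining circle $\{\bbC\cdot e_1\}\cap\bbC^n_1$ we have set $\sigma(\lambda e_1) = \lambda\bdone$, which is manifestly Borel in $\lambda$; a map continuous off a closed set and Borel on that closed set is Borel, so $\sigma$ is measurable, and it indeed takes values in $\bbU(n)$ since Householder reflections and scalar multiples of $\bdone$ by unimodular scalars are unitary, with $\sigma(f)e_1 = f$ in all cases.

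Assembling (i)--(iii) and invoking Proposition \ref{P4.2AA}, the product measure $\mu_{G/H}\otimes\mu_H$ pushes forward under $\Sigma$ to Haar measure on $\bbU(n)$; translating back through $G/H \cong \bbC^n_1$ and $H \cong \bbU(n-1)$ (under which $\mu_H$ is Haar on $\bbU(n-1)$) yields precisely the stated decomposition of Haar measure. The only genuinely substantive point is step (ii) --- pinning down the quotient measure as the surface measure --- and even that is standard; everything else is either handed to us by Proposition \ref{P4.2AA} or a routine topological bookkeeping check (one should also confirm in passing that the quotient Borel structure on $G/H$ agrees with the one transported from $\bbC^n_1$, which is immediate once the descended map is known to be a homeomorphism).
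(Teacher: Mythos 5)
Your proposal is correct and follows essentially the same route as the paper: identify $\bbU(n)/\bbU(n-1)$ with $\bbC^n_1$ via the orbit map $U\mapsto Ue_1$, check that the Householder section $\sigma$ is Borel, and invoke Proposition \ref{P4.2AA}, with the quotient measure pinned down as the rotation-invariant surface measure by uniqueness of invariant measures under a transitive compact-group action (the same argument the paper uses in Section \ref{s3}). Your write-up merely makes explicit some bookkeeping the paper leaves implicit.
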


The final assertion follows from Proposition \ref{P4.2AA}.

To link this to OPUC, assume that $e_1$ is cyclic for $U$ and let $d\mu$ be the spectral measure associated to $(U, e_1)$. By the spectral theorem, there is a unitary transformation $V: \bbC^n \to L^2(\partial\bbD, d\mu)$ such that $Ve_1=1$ (the constant function) and $(VUV^{-1}h)z=zh(z)$, so that $(VUe_1)(z)=z$. In terms of the orthogonal polynomials w.r.t.\ $\mu$,
this means that $Ve_1=\Phi_0=\frac{\Phi_0}{\|\Phi_0\|}=:\varphi_0$  and
$Ve_2=\frac{\Phi_1}{\| \Phi_1\|}=:\varphi_1$
(the ortho\emph{normal} polynomials). 
Szeg\H{o} recursion \eqref{1.2} for the normalized polynomials says (with $\rho_0=\sqrt{1-|\alpha_0|^2}$)
\begin{equation}\label{4.4f}
  z\varphi_0 = \rho_0 \varphi_1 + \bar{\alpha}_0 \varphi_0
\end{equation}
so comparing with \eqref{4.4c}, we see that $\beta = \alpha_0$ and $\kappa = \rho_0$ and
\begin{equation}\label{4.4g}
  \Theta(\alpha) = \left(
                    \begin{array}{cc}
                      \bar{\alpha} & \rho \\
                      \rho & -\alpha \\
                    \end{array}
                  \right)
\end{equation}

The key lemma is

\begin{lemma} \lb{L4.2B} Let $e_1$ and $e_2$ be two orthonormal vectors in $\bbC^n_1$.  Let $U'$ be a unitary on $[e_1]^\perp$ and $\Theta(\tilde{\alpha})$ given by \eqref{4.4g} in $e_1,e_2$ basis.  Let
\begin{equation}\label{4.4h}
  U = [\Theta(\tilde{\alpha})\oplus \bdone_{n-2}][\bdone_1 \oplus U']
\end{equation}
%
%$($where $\bdone_k$ is the size $k$ identity matrix$)$.
Then $e_1$ is cyclic for $U$ if and only if $e_2$ is cyclic for $U'$.  If they are cyclic, the Verblunsky coefficients, $\{\alpha_j\}_{j=0}^{n-1}$ for $(U,e_1)$ and  $\{\beta_j\}_{j=0}^{n-2}$ for $(U',e_2)$ are related by
\begin{equation}\label{4.4i}
  \alpha_0 = \tilde{\alpha}; \qquad \alpha_j = \beta_{j-1}, \, j=1,\dots,n-1
\end{equation}
\end{lemma}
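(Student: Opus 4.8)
The plan is to push everything over to the orthonormal polynomials attached to the pairs $(U,e_1)$ and $(U',e_2)$ and to show that $U$ is $U'$ with one Szeg\H{o} step ``inserted at the front''. Write $A=\Theta(\tilde\alpha)\oplus\bdone_{n-2}$ and $B=\bdone_1\oplus U'$, so that $U=AB$, and $\rho=\sqrt{1-|\tilde\alpha|^2}$. Since $B$ fixes $e_1$ and the first column of $\Theta(\tilde\alpha)$ in the $e_1,e_2$ basis is $(\bar{\tilde\alpha},\rho)^{t}$, one gets $Ue_1=\bar{\tilde\alpha}e_1+\rho e_2$. Assuming $e_1$ is cyclic for $U$ (so the spectral measure $\mu$ of $(U,e_1)$ is a genuine $n$-point measure with orthonormal polynomials $\varphi_m$ and reversed polynomials $\varphi_m^*$), the elementary identity $\bar\alpha_m=\langle \varphi_m^*(U)e_1,\,U\varphi_m(U)e_1\rangle$ (obtained from \eqref{1.2} by pairing with $\varphi_m^*$ and using $\varphi_{m+1}\perp\varphi_m^*$) gives at $m=0$ that $\bar\alpha_0=\langle e_1,Ue_1\rangle=\bar{\tilde\alpha}$, i.e.\ $\alpha_0=\tilde\alpha$.

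For the cyclicity equivalence I would prove, by induction on $k\ge 1$, the purely linear-algebraic identity
\[
  \calK_k:=\spann\{e_1,Ue_1,\dots,U^ke_1\}=\bbC e_1\oplus\calL_{k-1},\qquad \calL_m:=\spann\{e_2,U'e_2,\dots,(U')^me_2\}\subseteq[e_1]^\perp ,
\]
with $\calL_{-1}=\{0\}$. The case $k=1$ follows from $Ue_1=\bar{\tilde\alpha}e_1+\rho e_2$ and $\rho\neq0$. For the step the key observation is that for $v\in[e_1]^\perp$ one has $Uv=A(U'v)$, and $A$ alters only the $e_2$-component of a vector of $[e_1]^\perp$ (moving it into the $e_1,e_2$-plane); hence $Uv$ differs from $U'v$, modulo $\bbC e_1$, only by a scalar multiple of $e_2$. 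This yields both $U\calL_{k-1}\subseteq\bbC e_1\oplus\calL_k$ and, by reading the same relation in reverse, $(U')^ke_2\in\calK_{k+1}$, completing the induction. Taking $k=n-1$ and comparing dimensions, $e_1$ is cyclic for $U$ iff $\dim\calL_{n-2}=n-1$, i.e.\ iff $e_2$ is cyclic for $U'$.

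For the relation between the Verblunsky coefficients, assume cyclicity, let $\nu$ be the spectral measure of $(U',e_2)$ with orthonormal polynomials $\psi_m$ and reversed polynomials $\psi_m^*$, and set $v_m=\varphi_m(U)e_1$, $v_m^*=\varphi_m^*(U)e_1$, $w_m=\psi_m(U')e_2$, $w_m^*=\psi_m^*(U')e_2$. I would show by induction on $m\ge1$ that $v_m=w_{m-1}$, that $v_m^*=Aw_{m-1}^*$, and that $\alpha_m=\beta_{m-1}$ (the last extracted once $v_m,v_m^*$ are known). The base case is the computation $v_1=\rho_0^{-1}(Ue_1-\bar{\tilde\alpha}e_1)=e_2=w_0$ and $v_1^*=\rho_0^{-1}(e_1-\tilde\alpha Ue_1)=\rho e_1-\tilde\alpha e_2=\Theta(\tilde\alpha)e_2=Ae_2=Aw_0^*$. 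For the step: $v_m=w_{m-1}\in[e_1]^\perp$ gives $Uv_m=AU'w_{m-1}$, so with $v_m^*=Aw_{m-1}^*$ and $A$ unitary,
\[
  \bar\alpha_m=\langle v_m^*,Uv_m\rangle=\langle Aw_{m-1}^*,AU'w_{m-1}\rangle=\langle w_{m-1}^*,U'w_{m-1}\rangle=\bar\beta_{m-1},
\]
whence $\alpha_m=\beta_{m-1}$ and $\rho_m=\sqrt{1-|\alpha_m|^2}=\sqrt{1-|\beta_{m-1}|^2}=:\rho'_{m-1}$; the Szeg\H{o} recursion \eqref{1.2} and its companion for $\varphi^*$ then give $v_{m+1}=A\rho_{m-1}'^{-1}(U'w_{m-1}-\bar\beta_{m-1}w_{m-1}^*)=Aw_m$ and likewise $v_{m+1}^*=Aw_m^*$. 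Finally, for $m\ge1$ the orthogonality of $\psi_m$ to $\psi_0\equiv 1$ forces $\langle w_m,e_2\rangle=0$, so $w_m\in[e_1,e_2]^\perp$ and $Aw_m=w_m$; thus $v_{m+1}=w_m$, closing the induction and giving $\alpha_j=\beta_{j-1}$ for $1\le j\le n-1$.

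I expect the real work to lie in the bookkeeping around the reversed polynomials: the statement about $v_m$ alone does not propagate, so the auxiliary identity $v_m^*=Aw_{m-1}^*$ must be carried through the induction, and one must check that the extraction formula $\bar\alpha_m=\langle\varphi_m^*(U)e_1,U\varphi_m(U)e_1\rangle$ survives at the terminal index $m=n-1$, where $\rho_{n-1}=0$ and the recursion degenerates (there one uses $\Phi_n\equiv0$ in $L^2(d\mu)$ in place of the recursion). Everything else is routine linear algebra.
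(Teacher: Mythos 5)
Your argument is correct, and it takes a genuinely different route from the one in the paper. The paper proves the index shift \eqref{4.4i} by quoting the $\calL\calM$ machinery: it establishes the factorization $\calC(\alpha_0,\alpha_1,\dots)=[\Theta(\alpha_0)\oplus\bdone_{n-2}][\bdone_1\oplus\wti{\calC}(\alpha_1,\dots)]$, identifies the relevant orthonormal bases, and invokes the injectivity of $\alpha\mapsto\calC(\alpha)$; the whole content of the lemma is then read off from one matrix identity. You instead run the Szeg\H{o} recursion directly on the operators, carrying the pair $v_m=\varphi_m(U)e_1$, $v_m^*=\varphi_m^*(U)e_1$ through an induction and comparing with $w_{m-1},w_{m-1}^*$ for $(U',e_2)$. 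This is essentially a hands-on reconstruction of the GGT/AGR factorization mentioned in the paper's closing Remark (and is thus closer in spirit to the original Killip--Nenciu argument than to the CMV proof given here); what it buys is self-containedness --- no CMV or GGT representation theory is needed, only the recursion, its $*$-companion, and the extraction formula $\bar\alpha_m=\langle\varphi_m^*,z\varphi_m\rangle$. The price is the bookkeeping you correctly identify: the hypothesis on $v_m$ alone does not propagate, so $v_m^*=Aw_{m-1}^*$ must be carried along, and one must note that $w_m\perp e_2$ (for $m\ge1$) to convert $Aw_m$ back into $w_m$. Your treatment of the terminal index $m=n-1$ (where $\rho_{n-1}=0$ and one uses $\Phi_n\equiv0$ in $L^2(d\mu)$) and your dimension-counting induction $\spann\{e_1,Ue_1,\dots,U^ke_1\}=\bbC e_1\oplus\spann\{e_2,\dots,(U')^{k-1}e_2\}$ for the cyclicity equivalence --- which the paper dismisses as ``a simple calculation'' --- are both sound; the only thing to watch is consistency of the inner-product convention (linear in the first or second slot) when matching your extraction formula to \eqref{4.4b}, but this affects nothing in the chain of equalities since $A$ is unitary.
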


\begin{remark}The proof will rely on matrix realizations of multiplication by $z$ on $L^2(\partial\bbD,d\mu)$, the subject of \cite[Chapter 4]{OPUC1}.  There are three such representations there.  First the GGT matrix, $\calG$, which uses the orthonormal basis of $\bbC^n$ (assuming $\mu$ is an $n$--point measure) obtained by applying Gram--Schmidt to $1,z,z^2,\dots$ (i.e.\ the orthonormal polynomials).  Second the CMV matrix, $\calC$, in the basis obtained by using Gram--Schmidt on $1,z,z^{-1},z^2,z^{-2},\dots$, and finally the alternate CMV matrix, $\wti{\calC}$, obtained using the basis obtained by applying Gram--Schmidt to $1,z^{-1},z,z^{-2},z^2,\dots$.  Below we'll give a proof exploiting $\calC$ and $\wti{\calC}$ and afterwards indicate a proof using $\calG$.
\end{remark}

\begin{proof} The cyclicity statement is a simple calculation.

We recall the $\calL\calM$ factorization of the CMV matrix \cite[Section 4.2]{OPUC1}.  Define $\Theta_{-1}$ to be the $1 \times 1$ identity matrix and, if $|\alpha| = 1$, $\Theta(\alpha)$ is the $1 \times 1$ matrix with element $\bar{\alpha}$.  If $\alpha \in \bbD$, $\Theta(\alpha)$ is given by \eqref{4.4g}.  If $n$ is even, we define as operators on $\bbC^n$:
\begin{equation}\label{4.4j}
  \calL = \Theta(\alpha_0) \oplus \Theta(\alpha_2) \oplus \dots \oplus \Theta(\alpha_{n-2});  \qquad \calM = \Theta_{-1} \oplus \Theta(\alpha_1) \oplus \dots \oplus \Theta(\alpha_{n-1})
\end{equation}
and if $n$ is odd
\begin{equation}\label{4.4k}
  \calL = \Theta(\alpha_0) \oplus \Theta(\alpha_2) \oplus \dots \oplus \Theta(\alpha_{n-1}); \qquad \calM = \Theta_{-1} \oplus \Theta(\alpha_1) \oplus \dots \oplus \Theta(\alpha_{n-2})
\end{equation}

If $\mu$ is an $n$--point measure, let $\{\chi_j\}_{j=0}^{n-1}$ be the orthonormal basis obtained using Gram--Schmidt on $1,z,z^{-1},\dots,z^k$ if $n=2k$ and $1,z,z^{-1},\dots,z^k,z^{-k}$ if $n=2k+1$.  Let $\{x_j\}_{j=0}^{n-1}$ be the same for $1,z^{-1},z\dots,z^{-k}$ if $n=2k$ or $1,z^{-1},z,\dots,z^{-k},z^k$ if $n=2k+1$.  One defines the CMV and alternate CMV matrices by
\begin{equation}\label{4.4l}
  \calC_{k\ell} = \jap{\chi_k,z\chi_\ell}, \qquad \wti{\calC}_{k\ell} = \jap{x_k,zx_\ell}
\end{equation}
Then
\begin{equation}\label{4.4m}
  \calL_{k\ell} = \jap{\chi_k,zx_\ell}, \qquad \calM_{k\ell} = \jap{x_k,\chi_\ell}
\end{equation}
so we have that
\begin{equation}\label{4.4n}
  \calC = \calL \calM; \qquad \wti{\calC} = \calM \calL
\end{equation}

Clearly
\begin{align}
  \calL(\alpha_0,\alpha_2,\dots) &= [\Theta(\alpha_0)\oplus \bdone_{n-2}][\bdone_2 \oplus \calL(\alpha_2,\dots)] \lb{4.4o} \\
                                 &= [\Theta(\alpha_0)\oplus \bdone_{n-2}][\bdone_1 \oplus \calM(\alpha_2,\dots)] \lb{4.4p}
\end{align}

Clearly,
\begin{equation}\label{4.4q}
  \calM(\alpha_1,\alpha_3,\dots) = \bdone_1 \oplus \calL(\alpha_1,\alpha_3,\dots)
\end{equation}
and this, combined with \eqref{4.4p}, implies the critical
\begin{equation}\label{4.4r}
  \calC(\alpha_0,\alpha_1,\dots) = [\Theta(\alpha_0) \oplus \bdone_{n-2}][\bdone_1 \oplus \wti{\calC}(\alpha_1,\alpha_2,\dots)]
\end{equation}

One consequence of \eqref{4.4r} is that the map $(\alpha_0,\alpha_1,\dots) \mapsto \calC(\alpha_0,\alpha_1,\dots)$ is injective, a critical part of a spectral theoretic proof of Verblunsky's Theorem (\cite[Theorem 4.2.8]{OPUC1} has a more awkward proof of this fact),
for $\alpha_0 = \overbar{\jap{\delta_1,\calC(\alpha_0,\alpha_1,\dots)\delta_1}}, \, \alpha_1 = \overbar{\jap{\delta_2,(\Theta(\alpha_0) \oplus \bdone_{n-2})^{-1} \calC(\alpha_0,\alpha_1,\dots) \delta_2}}, \,  \alpha_2 = \overbar{\jap{\delta_3,(\Theta(\alpha_0) \oplus \bdone_{n-2})^{-1} \calC(\alpha_0,\alpha_1,\dots)(\bdone_1 \oplus \Theta(\alpha_1) \oplus \bdone_{n-3})^{-1} \delta_3}}, \, \dots$.
Moreover, this inductive argument shows the initial basis is the CMV basis for the matrix.  A similar set of arguments work for the alternate CMV matrix and its basis

Another consequence of \eqref{4.4r} is the proof of \eqref{4.4i}.  For given $U,U',e_1,e_2$ as in the hypothesis, $e_1$ and $e_2$ are the first two elements in the CMV basis for $(U,e_1)$.  Extend $e_1,e_2$ to the full CMV basis for $(U,e_1)$ (call its elements $\{e_1,e_2,e_3,\ldots,e_n\}$).  In this basis, $U=\calC(\alpha_0,\alpha_1,\dots,\alpha_{n-1})$, so \eqref{4.4h} and \eqref{4.4r} imply that $U'= \bdone_1 \oplus \wti{\calC}(\alpha_0,\alpha_1,\dots,\alpha_{n-1})$.  By the remark at the end of the previous paragraph and \eqref{4.4r}, $\{e_2,e_3,\ldots,e_n \}$ is the alternate CMV basis for $\wti{\calC}$ and so, since $\wti{\calC}$ determines its $\alpha$'s, we conclude \eqref{4.4i}.
\end{proof}

\begin{remark} For GGT matrices, what Simon \cite{SimonCMV} calls the AGR factorization after Ammar, Gragg and Reichel \cite{AGR}, implies that
\begin{equation}\label{4.4s}
  \calG(\alpha_0,\alpha_1,\dots\alpha_{n-1}) = [\Theta({\alpha_0}) \oplus \bdone_{n-2}][\bdone_1 \oplus \calG(\alpha_1,\dots,\alpha_{n-1})]
\end{equation}
This can replace \eqref{4.4r} in an alternate proof of the Lemma and this alternate proof is related to the argument in \cite{KN}.
Moreover, Simon \cite{SimonCMV} has a version of the critical Lemma A.3 proven using the AGR factorization.

This also provides new insights into the difference between the GGT and CMV representations.  For $n$ even, define $\wti{\calL}_j, \, j=0,2,\dots, n$ as for $\calL$ with $\Theta_0,\dots,\Theta_{j-2},\Theta_{j+2},\dots,\Theta_{n-2}$ replaced by zero (only $\Theta_j$ remains in the direct sum) and similarly for $\wti{\calM}_j, \, j=-1,1,\dots,{n-1}$ (where $\Theta_{-1}, \Theta_{n-1}$ are $1 \times 1$ matrices.  Thus we have that
\begin{equation}\label{4.4t}
  \calL=\wti{\calL}_0 + \wti{\calL}_2 + \dots \wti{\calL}_{n-2}; \qquad \calM=\wti{\calM}_{-1}+\dots+\wti{\calM}_{n-1}
\end{equation}
Then the $\wti{\calL}$'s and $\wti{\calM}$'s are all Householder reflections and
\begin{equation}\label{4.4u}
  \calC = \wti{\calL}_0 \wti{\calL}_2 \wti{\calL}_{n-2} \dots \wti{\calM}_1 \wti{\calM}_3, \dots \wti{\calM}_{n-1}; \quad \calG = \wti{\calL}_0 \wti{\calM}_1 \wti{\calL}_2 \wti{\calM}_3 \dots\wti{\calM}_{n-1}
\end{equation}
with similar formulae if $n$ is odd.
\end{remark}

\begin{proof} [Proof of Theorem \ref{T4.2}]
Fix a vector $e_1 \in \bbC^n_1$. For $U$ picked according to Haar measure, $e_1$ is cyclic with probability one. Applying Proposition \ref{P4.2A}, $\wti{\bbP}_n(\alpha_0,\dots,\alpha_{n-1})$ is a product measure where $\alpha_0=\overline{\left(e_1,Ue_1 \right)}$ is distributed according to the distribution of $z_1$ if $\mathbf{z}$ is uniformly distributed on $\bbC^n_1$ which is exactly $d\kappa_{n-2}$ (since $\tfrac{n-1}{\pi}(1-|v_1|^2)^{n-2}$ is the size of the ``slice'' $\{(v_2,\dots,v_n) \in \bbC^n \, | \, |(v_2,\dots,v_n)|^2 = (1 - |v_1|)^2)\}$).

The other factor is what Haar measure for $\bbU(n-1)$ induces on the Verblunsky coefficients, $\beta_0,\dots,\beta_{n-2}$, for $\sigma(f)^{-1}U$.  By Lemma \ref{L4.2B}, the $\beta$'s are given by \eqref{4.4i}, so the result follows by induction.
\end{proof}

%%%%%%%%%%%%%%%%%%%%%%%%%%%%%%%

\end{document}